\numberwithin{equation}{section}
\theoremstyle{plain}
  \newtheorem{thm}{Theorem}[section]
  \newtheorem{lem}[thm]{Lemma}
  \newtheorem{prop}[thm]{Proposition}
  \newtheorem{cor}[thm]{Corollary}
  \newtheorem*{thm*}{Theorem}  
  \newtheorem*{thmm*}{Main Theorem} 
\theoremstyle{definition}
  \newtheorem{defn}[thm]{Definition}
  \newtheorem{rmk}[thm]{Remark}
  \newtheorem*{ack*}{Acknowledgement}
  \newtheorem*{ref*}{Reference}
  \newtheorem*{ex*}{Example}
  \newtheorem*{ft*}{Fact}
\theoremstyle{plain}
\newcommand\pl{\partial}
\newcommand\kp{\kappa}
\newcommand\ta{\theta}
\newcommand\ld{\lambda}
\newcommand\Gm{\Gamma}
\newcommand\Sm{\Sigma}
\newcommand\dt{\delta}
\newcommand\Dt{\Delta}
\newcommand\vep{\varepsilon}
\newcommand\vph{\varphi}
\newcommand\BR{\mathbb{R}}
\newcommand\BC{\mathbb{C}}
\newcommand\BP{\mathbb{P}}
\newcommand\BN{\mathbb{N}}
\newcommand\BS{\mathbb{S}}
\newcommand\bx{\mathbf{x}}
\newcommand\CC{\mathcal{C}}
\newcommand\CH{\mathcal{H}}
\newcommand\CL{\mathcal{L}}
\title{Self-expanders to the mean curvature flow based on the generalized Lawson-Osserman cone}
\author{Chen-Kuan Lee}
\address{Department of Mathematics\\National Taiwan University\\Taipei 10617\\ Taiwan}
\email{chenkuanlee@ntu.edu.tw}
\begin{document}

\begin{abstract}
We derive the equation of self-similar solutions to mean curvature flow based on the generalized Lawson-Osserman cone and prove the existence of self-expanders by modifying the theory of equilibria in the autonomous system. In particular, those self-expanders are unique if a local assumption is given.
\end{abstract}

\maketitle


\section{\textbf{Introduction}}

Self-similar solutions arise naturally in the study of mean curvature flow (MCF). Self-shrinkers model the behavior of the singularities of MCF \cite{Hui90}. On the other hand, self-expanders are expected to model the behavior of the MCF that emerges from conical singularities \cite{Sta98}. As a result, the analyses of self-similar solutions attract much attention in the past thirty years.

The starting point of this paper is the work of Lawson and Osserman on the minimal graph system in higher codimensions. While the Simons cone is a famous example of codimension one \cite{Sim68}, Lawson and Osserman \cite{LO77} discovered a 4-dimensional minimal cone in $\BR^7$ based on the Hopf fibration: $$\{ (r \bx, \frac{\sqrt{5}}{2} r \CH(\bx)) \big| \, r \in \BR_{\geq 0}, \bx \in \BS^{3} \},$$ where $\CH: \BS^{3} \rightarrow \BS^{2}$ is the Hopf fibration. When equipping $\BR^7$ with the $G_2$-structure, Harvey and Lawson \cite{HL82} proved that it is in fact coassociative, and is therefore area-minimizing. 

Ding and Yuan \cite{DY06} resolved the singularity of the Lawson-Osserman cone. They constructed a family of minimal graphs which are asymptotic to the Lawson-Osserman cone, but are smooth at the origin. Recently, Xu, Yang and Zhang \cite{XYZ19} generalized the Lawson-Osserman cone and the Ding-Yuan's minimal graphs by composing suitable maps from $\BC\BP^n$ to $\BS^m$. Specially, they constructed some maps $\CL: \BS^n \rightarrow \BS^m$ with certain properties  similar as the Hopf fibration.

In this paper, we construct self-expanders to the MCF based on the generalized Lawson-Osserman cone. What follows is our main theorem. The technical terms will be explained in section 2.
\begin{thmm*}
Suppose that $\CL: \BS^n \rightarrow \BS^m$ is an LOMSE of $(n, p, k)$-type, where $(n, p, k) = (3, 2, 2), (5, 4, 2), (5, 4, 4)$ or $n \geq 7$. Then, there exist positive constants $\vep_0$, $R_0$ and $\vph_0$ with the following significance: for any $\vep \in (0, \vep_0]$ and $R \in (0, R_0]$, there exists a smooth self-expander in $\BR^{n + m + 2}$ of the form $$\Sm = \{(r \bx, f(r) \CL(\bx)) \, \big| \, r \in \BR_{\geq 0}, \bx \in \BS^{n}\}$$ such that $f: \BR_{\geq 0} \rightarrow \BR$ satisfies $$f(R) = \vep R $$ Moreover, $f$ has the following properties:
\begin{enumerate}
    \item \label{result1} $0 \leq f \leq \vph_0 r$ and $0 \leq f_r.$
    \item \label{result2} $f \in O(r^k)$ and $f_r \in O(r^{k - 1})$ as $r \rightarrow 0$.
    \item Assume that \begin{equation} \label{cond1}
        \left\{\begin{aligned}
            0 \leq f &\leq \vep r \\
            0 \leq f_r &\leq (2k -1) \frac{f}{r}
        \end{aligned}\right. \quad \text{for all } r \in (0, R],
    \end{equation} then $f$ is unique.
    \item $\lim_{r \rightarrow \infty} \frac{f(r)}{r}$ exists. In other words, $\Sm$ is asymptotic to the cone $$\{ ( r \bx, r \vph_{\infty} \CL(\bx)) \, \big| \, r \in \BR_{\geq 0}, \bx \in \BS^{n}\},$$ where $\vph_{\infty} = \lim_{r \rightarrow \infty} \frac{f(r)}{r}$, as $r \rightarrow \infty$.
\end{enumerate}
\end{thmm*}

The organization of this paper is as follows. In section 3, we use symmetry to transform the equation of self-similar solutions to a second order ODE. Section 4 contains a stable curve theorem, which is a generalization of the theory of equilibria in the autonomous system. After an appropriate change of variables and using the stable curve theorem, the existence of self-expanders is obtained in section 5. The uniqueness under the assumption \eqref{cond1} is established in section 6. Finally, we analyze the asymptotic behavior in section 7.

\begin{ack*} The author is really grateful to Prof. Chung-Jun Tsai for his helpful and inspiring comments. Part of this paper is from the author's master thesis. He also appreciates Prof. Mao-Pei Tsui's suggestions on further generalizations and Chin-Bin Hsu’s indications of deficiencies in an earlier draft. The author thanks the anonymous referees for their helpful comments that improved the quality of this paper.
\end{ack*}


\section{\textbf{Preliminary}}

\subsection{Self-similar solutions to mean curvature flow}

We recall some background material of mean curvature flow (MCF).

\begin{defn}
Let $\Sigma$ be a smooth submanifold in a Riemannian manifold $M$. If there exists a family of smooth immersions $F_t: \Sigma \rightarrow M$ satisfying \begin{equation}
    \left\{\begin{aligned}
    &(\frac{\pl F_t}{\pl t} (\bx))^{\perp} = H_{\Sigma_t}(\bx) \\
    &F_{0}(\bx) = \bx \text{ for all } \bx \in \Sm
    \end{aligned}\right.,
\end{equation} where $(\frac{\pl F_t}{\pl t})^{\perp}$ denotes the projection of the $\frac{\pl F_t}{\pl t}$ to the normal bundle of $\Sigma$, $$H_{\Sigma_t} \coloneqq (g_t)^{ij} \nabla^\perp_{\frac{\pl F_t}{\pl x^i}} \frac{\pl F_t}{\pl x^j}$$ denotes the mean curvature vector of $\Sigma_t$, and $$(g_t)_{ij} \coloneqq \langle \frac{\pl F_t}{\pl x^i}, \frac{\pl F_t}{\pl x^j} \rangle_M,$$ then $F_t$ is called the mean curvature flow (MCF) of $\Sigma$.
\end{defn}

In geometric flows, singularities are often modeled on soliton solutions. For MCF, there are two types of soliton solutions in Euclidean space that are of particular interest: the one moving by scaling, and the other one moving by translation. In this paper, we focus on the first one.

\begin{defn}
A submanifold in Euclidean space, $F: \Sm \rightarrow \BR^n$, is called a self-similar solution to the MCF if \begin{equation}
    H_{\Sigma} \equiv C F^\perp
\end{equation} for some constant $C \in \BR$, where $H_\Sigma$ is the mean curvature vector of $F: \Sm \rightarrow \BR^n$ and $F^{\perp}$ denotes the projection of the position vector $F$ in $\BR^n$ to the normal bundle of $\Sigma$. Moreover, it is called a self-shrinker if $C < 0$ and a self-expander if $C> 0$. When $C =0$, the submanifold is minimal.
\end{defn}

Notice that if $\Sigma$ is a self-similar solution, then $F_t$ defined by $$F_t = \sqrt{1 + 2Ct} F $$ moves by the MCF.

\begin{rmk}
After rescaling, only the sign of $C$ matters. It suffices to consider $C = 1, 0, -1$.
\end{rmk}

\subsection{The generalized Lawson-Osserman cone}

We first recall the definition of the Hopf maps $$\CH_d: \BS^{2d - 1} \rightarrow \BS^{d},$$ where $d = 2, 4, 8$.

\begin{defn}
We identify $\BR^{d}$ with the normed algebra: $\BC$, $\mathbb{H}$, $\mathbb{O}$ for $d = 2, 4, 8$, respectively. The Hopf map is defined by $$\begin{matrix}
\CH_d: & \BS^{2d - 1} & \rightarrow & \BS^{d}\\
 & (p, q) & \mapsto & (\|p\|^2 - \|q\|^2, 2q\overline{p})
\end{matrix}.$$
\end{defn}

By using the Hopf maps, the Lawson-Osserman cones are as follows.

\begin{defn}
The Lawson-Osserman cones are $$C_d = \{ (r \bx, \kp_d \, r \, \CH_d(\bx)) \big| \, r \in \BR_{\geq 0}, \bx \in \BS^{2d-1} \},$$ where $d = 2, 4, 8$, $\kp_d = \sqrt{\frac{2d+1}{4(d-1)}}$ and $\CH_d: \BS^{2d-1} \rightarrow \BS^{d}$ is the Hopf map. The constant $\kp_d$ is the unique one such that the cone is minimal.
\end{defn}

We now explain the notion of generalized Lawson-Osserman cones introduced by Xu, Yang and Zhang \cite{XYZ19}.

\begin{defn} \label{LOMSE}
For a smooth map $\CL: \BS^n \rightarrow \BS^m$, if there exists an acute angle $\ta$ such that $$M_{\CL, \ta} \coloneqq \{( \bx \cos \ta, \CL(\bx) \sin \ta) \, \big| \, \bx \in \BS^n\}$$ is a minimal submanifold of $\BS^{n+m+1}$, then $\CL$ is called a Lawson-Osserman map (LOM), $M_{\CL, \ta}$ is called the associated Lawson-Osserman sphere (LOS), and the cone $C_{\CL, \ta}$ over $M_{\CL, \ta}$ is called the corresponding Lawson-Osserman cone (LOC).

Suppose that $\CL: \BS^n \rightarrow \BS^m$ is an LOM whose image is not totally geodesic. Then $\CL$ is called an LOMSE if all non-zero singular values\footnote{Here, a singular value $\ld(\bx)$ of $(\CL_*)_\bx$ means the square root of an eigenvalue of the self-adjoint operator $((\CL_*)_\bx)^* (\CL_*)_\bx$.} $\ld(\bx)$ of $(\CL_*)_\bx$ are equal for every $\bx \in \BS^n$. Note that $\ld(\bx)$ is a continuous function of $\bx$. For an LOMSE, one can show that $\ld(\bx)$ equals a constant $\ld$ and $\CL$ has constant rank $p$. Furthermore, all components of this vector-valued function $\CL = (\CL_1, \cdots, \CL_{m+1}) \in \BS^m \subset \BR^{m+1}$ are spherical harmonic functions of degree $k \geq 2$ and the singular value $\ld = \sqrt{\frac{k(n + k - 1)}{p}}$ (cf. \cite[Theorem 2.8]{XYZ19}). Such an $\CL$ is called an LOMSE of $(n, p, k)$-type. 
\end{defn}

It can easily be seen that the Hopf map $\CH_d$ is exactly the LOMSE of $(2d-1, d, 2)$-type. Here is the classification and all are based on the Hopf maps.

\begin{prop}{\cite[Theorem 2.10]{XYZ19}}
There are three families of LOMSEs (see Definition \ref{LOMSE}) of $(n, p, k)$-type that generalize the Hopf maps:
\begin{enumerate}
    \item $(n, p) = (2l+1, 2l)$ and $k = 2 q$, $l, q \in \BN$, which generalize the Hopf map $\CH_2$.
    \item $(n, p) = (4l+3, 4l)$ and $k = 2 q$, $l, q \in \BN$, which generalize the Hopf map $\CH_4$.
    \item $(n, p) = (15, 8)$ and $k = 2 q$, $q \in \BN$, which generalize the Hopf map $\CH_8$.
\end{enumerate}
\end{prop}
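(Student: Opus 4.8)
The plan is to translate the three defining properties of an LOMSE into the language of harmonic morphisms and then to invoke the rigidity of Riemannian submersions of round spheres. Because all nonzero singular values of $(\CL_*)_\bx$ equal the constant $\ld$ and $\CL$ has constant rank $p$, the restriction of $(\CL_*)_\bx$ to the orthogonal complement $(\ker (\CL_*)_\bx)^\perp$ of its kernel is $\ld$ times a linear isometry onto $T_{\CL(\bx)}\BS^m$. Hence the image $N \coloneqq \CL(\BS^n)$ is a $p$-dimensional submanifold, and once the induced metric on $N$ is rescaled by $\ld^{-2}$ the map $\CL \colon \BS^n \to N$ is a Riemannian submersion with $(n-p)$-dimensional fibers $\ker \CL_*$. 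On the other hand, each component of $\CL$ is a degree-$k$ spherical harmonic, so $\Delta_{\BS^n}\CL = -k(n+k-1)\CL$ is everywhere parallel to $\CL$; this is precisely the statement that $\CL$ is a harmonic eigenmap into $\BS^m$. Being simultaneously horizontally homothetic and harmonic, $\CL$ is a harmonic morphism onto $N$ with constant dilation $\ld$, and its horizontal energy density $k(n+k-1) = p\,\ld^2$ recovers the stated value of $\ld$.

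Next I would analyze the fibration $\CL \colon \BS^n \to N$. Horizontal homothety already forces the fibers to be minimal; combining the harmonic-morphism equations with the constant curvature of $\BS^n$ and the constancy of $\ld$, one upgrades this to total geodesy, so each fiber is a great subsphere $\BS^{n-p} \subseteq \BS^n$. At this point the classification of Riemannian submersions of round spheres with connected totally geodesic fibers applies: apart from the cases with point or $0$-dimensional fibers (excluded here because the image of $\CL$ is not totally geodesic), the only possibilities are the complex, quaternionic and octonionic Hopf fibrations $\BS^{2l+1}\to \BC\BP^l$, $\BS^{4l+3}\to \mathbb{H}\BP^l$ and $\BS^{15}\to \mathbb{O}\BP^1 = \BS^8$, whose fiber dimensions $1, 3, 7$ are the dimensions of the unit spheres in $\BC$, $\mathbb{H}$, $\mathbb{O}$. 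This pins $(n,p)$ to $(2l+1, 2l)$, $(4l+3, 4l)$ and $(15, 8)$, which are exactly the three families.

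Finally I would recover the degree. The submersion structure lets $\CL$ descend to a map $\overline{\CL}$ from the base $P \in \{\BC\BP^l, \mathbb{H}\BP^l, \mathbb{O}\BP^1\}$ (with the rescaled metric) into $\BS^m$, and horizontal homothety together with harmonicity makes $\overline{\CL}$ a minimal isometric immersion whose components are Laplace eigenfunctions on $P$, i.e.\ a standard $q$-th eigenmap. Since the Hopf projection $\BS^n \to P$ is given by quadratic polynomials, pulling back doubles the degree, so the components of $\CL = \overline{\CL}\circ(\text{Hopf})$ are spherical harmonics of even degree $k = 2q$, with $q \geq 1$ forced by $k \geq 2$. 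The main obstacle is the middle step: proving that the fibers are totally geodesic and then invoking the rigidity classification of Riemannian submersions of spheres. This rigidity --- which both rules out exotic fibrations and ties the admissible fiber dimensions to the three normed division algebras --- is the genuinely deep input, whereas the reduction of the first paragraph and the eigenmap bookkeeping of the last are formal.
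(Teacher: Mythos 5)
First, a point of order: the paper does not prove this proposition at all --- it is quoted verbatim as \cite[Theorem 2.10]{XYZ19} --- so there is no in-paper argument to compare yours against, and your sketch has to stand on its own.

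Its overall shape (reduce to a Riemannian submersion $\CL\colon \BS^n \to (N,\ld^{-2}g_N)$ onto the image, identify the fibers, classify the fibration, then do eigenmap bookkeeping on the base) is reasonable, but the pivotal step is asserted rather than proved. The claim that ``combining the harmonic-morphism equations with the constant curvature of $\BS^n$ and the constancy of $\ld$, one upgrades [minimality of the fibers] to total geodesy'' is not a formal consequence of anything you have set up: minimality is the vanishing of the \emph{trace} of O'Neill's $T$-tensor, total geodesy is the vanishing of $T$ itself, and there is no general implication between them. The only off-the-shelf result that yields total geodesy for an arbitrary Riemannian submersion of a round sphere is the Gromoll--Grove--Wilking rigidity theorem, which already \emph{contains} the classification you are trying to derive; moreover both that theorem and the Escobales--Ranjan classification you invoke afterwards require the fibers to be \emph{connected}, a hypothesis you never verify (a constant-rank map from $\BS^n$ can a priori have disconnected fibers, and the image $N$ being an embedded submanifold also needs an argument). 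If you want to avoid quoting the rigidity theorem, you must instead exploit the concrete structure you have already recorded --- $\CL$ extends to a degree-$k$ homogeneous harmonic polynomial map of $\BR^{n+1}$ --- to show directly that the fibers are great $(n-p)$-subspheres; that computation is where the real content lies and it is entirely missing. Two further gaps: harmonicity of $\CL$ as a map into $N$ (rather than into $\BS^m$) needs the short orthogonality argument splitting $\tau(\iota_N\circ\CL)$ into a part tangent to $N$ and a part normal to $N$ in $\BS^m$; and the proposition is a two-sided classification, so you also owe the existence half --- that for every $l$ and every $q\in\BN$ there actually \emph{is} an LOMSE of type $(2l+1,2l,2q)$, etc., which requires producing, for each $q$, a suitable eigenmap of the relevant projective space (do Carmo--Wallach) and checking the Lawson--Osserman minimality condition for some acute angle $\ta$.
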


On the other hand, there are minimal graphs that resolve the singularities of LOCs.

\begin{prop}{\cite[Theorem 3.5]{XYZ19}}
Suppose that $\CL$ is an LOMSE (see Definition \ref{LOMSE}) of $(n, p, k)$-type. Then there is an analytic entire minimal graph of the form $$M_{\CL, \rho} = \{(r \bx, \rho(r) \CL(\bx)) \big| \, r \in \BR_{\geq 0}, \bx \in \BS^{n}\}$$ that is asymptotic to $C_{\CL, \ta}$. In particular, when $(n, p, k) = (3, 2, 2), (5, 4, 2), (5, 4, 4)$ or $n \geq 7$, $M_{\CL, \rho}$ is below $C_{\CL, \ta}$ in the sense that $$\frac{\rho}{r} \leq \vph_0 \coloneqq \tan \ta = \sqrt{\frac{p \ld^2 - n}{(n-p) \ld^2}}$$ for all $r$.
\end{prop}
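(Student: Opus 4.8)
The plan is to use the LOMSE symmetry to collapse the minimality of $M_{\CL,\rho}$ to a single second-order ODE for the profile $\rho$, and then to analyze that ODE by phase-plane methods. First I would compute the induced metric. Since $(\CL_*)_\bx$ has constant rank $p$ with all nonzero singular values equal to $\ld$, the tangent space $T_\bx\BS^n$ splits orthogonally into a $p$-dimensional horizontal part (on which $\CL_*$ scales by $\ld$) and an $(n-p)$-dimensional vertical kernel, and in the resulting frame the metric on $M_{\CL,\rho}$ is diagonal. Hence the area element is, up to the constant volume factor from the sphere directions, $W(r,\rho)\sqrt{1+(\rho')^2}\,dr$ with weight $W=(r^2+\ld^2\rho^2)^{p/2}r^{n-p}$, and the Euler--Lagrange equation of $\CA[\rho]=\int W\sqrt{1+(\rho')^2}\,dr$ is
$$\rho'' = (1+(\rho')^2)\Big[\frac{p\ld^2\rho-pr\rho'}{r^2+\ld^2\rho^2}-\frac{(n-p)\rho'}{r}\Big].$$
A direct check confirms that the linear profile $\rho=\vph_0 r$ solves this exactly when $\vph_0^2=\frac{p\ld^2-n}{(n-p)\ld^2}$, recovering $C_{\CL,\ta}$ as the singular conical solution.

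Next I would produce a local analytic solution at the singular point $r=0$. Since each $\CL_i$ is the restriction of a degree-$k$ harmonic homogeneous polynomial $P_i$, smoothness of the ambient graphing function $\by\mapsto r^{-k}\rho(r)\,P(\by)$ (with $r=|\by|$) at the origin forces $\rho(r)=r^{k}g(r^{2})$ with $g$ analytic and $g(0)>0$. Substituting a power series $\rho(r)=\sum_{j\ge 0}a_j r^{k+2j}$ into the ODE and matching orders gives a recursion determining every $a_j$ from the free leading coefficient $a_0>0$; a majorant estimate shows the series converges, so there is an analytic one-parameter family of smooth solutions near $0$. The dilation symmetry $\rho(r)\mapsto a^{-1}\rho(ar)$ of the minimality condition acts on the leading coefficient by $a_0\mapsto a^{\,k-1}a_0$, which is nontrivial because $k\ge 2$, so up to rescaling the smooth solution is unique.

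To pass from a local to an entire graph and extract the asymptotics, I would put the equation in autonomous form by setting $s=\log r$, $u=\rho/r$, $w=\rho'$; then $\dot u=w-u$ and the ODE becomes a planar autonomous system whose only relevant equilibrium is the cone, $(u,w)=(\vph_0,\vph_0)$. The smooth solution of the previous step is the orbit leaving $u=0$ as $s\to-\infty$, since $\rho/r\sim a_0 r^{k-1}\to0$. I would then show (i) the orbit is defined for all $s$, i.e. $\rho,\rho'$ do not blow up at finite $r$, using monotonicity $\rho,\rho'>0$ together with an a priori upper bound for $\rho/r$; and (ii) it converges to the equilibrium as $s\to+\infty$, captured by an invariant-region argument that identifies the incoming orbit with the stable manifold of $(\vph_0,\vph_0)$. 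This yields $\lim_{r\to\infty}\rho/r=\vph_0$, i.e. $M_{\CL,\rho}$ is asymptotic to $C_{\CL,\ta}$.

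The refined ``below the cone'' estimate is where the dimension hypotheses enter, and it is the step I expect to be the main obstacle. Linearizing the planar system at $(\vph_0,\vph_0)$ produces a $2\times2$ matrix whose eigenvalues are real precisely when a discriminant built from $n$, $p$, and $\ld$ (equivalently $k$) is nonnegative; this is exactly the Simons-cone--type threshold that singles out $(n,p,k)=(3,2,2),(5,4,2),(5,4,4)$ and $n\ge 7$. When the eigenvalues are real the equilibrium is a node and the stable orbit approaches monotonically, so the half-plane $\{u<\vph_0\}$ is invariant and the solution obeys $\rho/r\le\vph_0$ for all $r$; in the complementary dimensions the equilibrium is a focus and the orbit spirals, crossing the cone infinitely often, which is why the one-sided estimate is claimed only in the listed cases. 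The delicate points are verifying that the incoming orbit genuinely lands on the stable manifold rather than escaping, and checking the sign of the discriminant in the borderline small cases; constructing $C_{\CL,\ta}$ as a one-sided barrier so that $\{u<\vph_0\}$ is invariant is the crux of the argument.
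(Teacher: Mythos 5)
First, a point of orientation: the paper does not prove this proposition at all --- it is quoted verbatim from \cite[Theorem 3.5]{XYZ19} as background, so the only in-house comparison available is with the parallel machinery the paper builds for the expander case ($C=1$) in Sections 3--5. Your skeleton does match that machinery and, as far as one can tell, the strategy of the cited source: reduce to the ODE $\frac{\rho_{rr}}{1+\rho_r^2}+\frac{(n-p)\rho_r}{r}+\frac{p(r\rho_r-\ld^2\rho)}{r^2+\ld^2\rho^2}=0$ (the $C=0$ case of the paper's Corollary 3.2), pass to $t=\log r$, $\vph=\rho/r$, identify the saddle at the origin with unstable eigenvalue $k-1$ and the sink at $(\vph_0,0)$, and observe that the sink is a node exactly when the discriminant $n^2-6n+1+\frac{8n^2}{k(k+n-1)}$ is nonnegative, which is precisely the list $(3,2,2),(5,4,2),(5,4,4)$, $n\ge 7$. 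That part is correct and is the same computation the paper records in Section 5.

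There are, however, two genuine gaps at the decisive steps. (i) Deriving the ODE as the Euler--Lagrange equation of the restricted functional $\int (r^2+\ld^2\rho^2)^{p/2}r^{n-p}\sqrt{1+\rho_r^2}\,dr$ only gives criticality under variations \emph{within} the symmetric family; it does not by itself show the graph is minimal as a submanifold. The correct route (Theorem \ref{thm0} here, with $C=0$) computes $H_\Sm$ directly and shows minimality is equivalent to the ODE \emph{together with} harmonicity of $\CL:(\BS^n,g_r)\to(\BS^m,g_m)$ for every $r$; that extra condition happens to hold automatically for LOMSEs, but it is an ingredient your variational shortcut silently suppresses. (ii) The ``below the cone'' step is not just the node/spiral dichotomy: your claim that in the node case ``the half-plane $\{u<\vph_0\}$ is invariant'' is false as stated, since on $\{u=\vph_0\}$ one has $\dot u=w-u=\psi$, which is positive whenever the orbit sits above the diagonal there, so orbits can cross the cone from below. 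What is actually needed --- and what you defer as ``the crux'' --- is the construction of a compact positively invariant region $\Dt$ in $\{0\le\vph\le\vph_0\}$ bounded by $\{\psi=0\}$ and an explicit curve $\psi=g(\vph)$ joining the two equilibria, satisfying both the flow inequality $g'(\vph)>X_2/X_1$ along the graph and $g'(0)>k-1$ so that the unstable manifold of the origin enters $\Dt$; this is exactly Proposition \ref{prop2} here and \cite[Sec.\ 4.3]{XYZ19}, and it is also what gives global existence of the orbit (your appeal to monotonicity of $\rho/r$ does not cover the spiral cases, where the entire solution still exists and is asymptotic to the cone but $\rho/r$ oscillates about $\vph_0$). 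Until that barrier is exhibited, the one-sided estimate $\rho/r\le\vph_0$ is asserted rather than proved.
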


\subsection{Harmonic maps}

Let $(M^m, g_M)$ and $(N^n, g_N)$ be Riemannian manifolds. Let $f: M \rightarrow N$ be a smooth map. Then the energy density of $f$ at $x \in M$ is defined to be $$e_{g_M}(f) \coloneqq \frac{1}{2} \sum_{i = i}^m g_N(f_* e_i, f_* e_i),$$ where $\{e_i\}_{i =1}^m$ is an orthonormal basis of $T_x M$. Since we will change the metric on the domain, $M$, we record the metric of the domain. The energy of $f$ is defined to be $$E_{g_M}(f) \coloneqq \int_M e_{g_M}(f) \, dvol_{g_M}.$$

Let $\nabla^{TM}$ be the Levi-Civita connection on $TM$ with respect to $g_M$. Let $\nabla^{f^*TN}$ be the connection on $f^*TN$ that is compatible with $g_N$. Then the second fundamental form of $f$ is defined to be $$B_{XY}(f) \coloneqq \nabla^{f^*TN}_X Y - f_*(\nabla^{TM}_X Y).$$ If $B(f) \equiv 0$, then $f$ is called a totally geodesic map. The tension field of $f$ is defined to be the trace of its second fundamental form under $g_M$: $$\tau_{g_M}(f) \coloneqq \sum_{i = 1}^m B_{e_i e_i}(f).$$ If $\tau_{g_M}(f) \equiv 0$, then $f$ is called a harmonic map. Using the first variation, it can be showed that $f$ is harmonic if and only if $f$ is a critical point of energy functional $E(\cdot)$ (cf. \cite[Chap. 1.2.3, p.13--14]{Xin96}).

Given a smooth function $f: (M, g_M) \rightarrow (\BR^n, g_{\text{std}})$, its tension field has a simple expression, which is \begin{equation} \label{lp}
    \tau_{g_{M}}(f) = \Dt_{g_{M}} f,
\end{equation} where $\Dt_{g_M}$ denotes the Laplace-Beltrami operator. Namely, the harmonicity here matches the usual one.

Given an isometric immersion $\iota: (M, g_M) \hookrightarrow (N, g_N)$, the second fundamental form of $\iota$ is just the second fundamental form of $M$ as an immersed submanifold of $N$ (cf. \cite[Chap. 1.2.4, p.15]{Xin96}). Moreover, it implies that the tension field of $f$ is exactly the mean curvature vector of $M$ in $N$. In other words, That $M$ is minimal in $N$ is equivalent to $\iota$ is harmonic in this case.

There is a composition formula for tension fields. Let $M, N, \Bar{N}$ be Riemannian manifolds. Suppose that $f: M \rightarrow N$ and $\Bar{f}: N \rightarrow \Bar{N}$ are smooth maps. Then $$\tau_{g_M}(\Bar{f} \circ f) = \Bar{f}_*(\tau_{g_M}(f)) + \sum_{i = 1}^m B_{f_* e_i, f_* e_i}(\Bar{f})$$ (cf. \cite[Chap. 1.4.1, p. 28]{Xin96}). Suppose that $\Bar{f}$ is an isometric immersion. Combining the discussion in the previous paragraphs, we simply write the formula as \begin{equation} \label{compo}
    \tau_{g_M}(\Bar{f} \circ f) = \tau_{g_M}(f) + \sum_{i = 1}^m h(f_* e_i, f_* e_i),
\end{equation} where $h$ is the second fundamental form of $N$ in $\Bar{N}$.


\section{\textbf{Necessary and sufficient conditions for graphical self-similar solutions}}

Given a smooth function $f: \BR_{\geq 0} \rightarrow \BR$ and a smooth map $\CL: \BS^n \rightarrow \BS^m$, we consider \begin{equation}
    \begin{matrix} 
    F: & \BR_{\geq 0} \times \BS^n & \rightarrow & \BR^{n+1} \times \BR^{m+1} \\
    \, & (r, \bx) & \mapsto & (r \bx, f(r) \CL(\bx))
    \end{matrix}
\end{equation} and study when $\Sm = F(\BR_{\geq 0} \times \BS^n)$ is a self-similar solution to the MCF in $\BR^{n+m+2}$.

Let $g$ be the metric on $\Sm$ induced by the standard Euclidean metric on $\BR^{n+m+2}$ and $g_r = I^*_r g$, where \begin{equation} \label{I_r}
    \begin{matrix}
I_r: & \BS^n & \rightarrow & \Sm \\
\, & \bx & \mapsto & (r \bx, f(r) \CL(\bx))
\end{matrix}.
\end{equation} Let $\nabla$ be the Levi-Civita connection on $(\Sm, g)$. Note that $\nabla_v r = \nabla_v f = 0$ for all $v \in T\BS^n$.

\begin{thm}
\label{thm0}
Assume that $f > 0$ whenever $r > 0$. Then $\Sm$ is a self-similar solution to the MCF, i.e. $H_\Sm = C F^\perp$ where $C = \pm 1$, if and only if the following two conditions hold:
\begin{enumerate}
    \item For each $r \in \BR_{> 0}$, $\CL: (\BS^n, g_r) \rightarrow (\BS^m, g_m)$ is harmonic, where $g_r = I^*_r g$ (see \eqref{I_r}), and $g_m$ is the standard metric on $\BS^m$.
    \item For each $r \in \BR_{> 0}$, \begin{equation} \label{eq}
        \Dt_g f - 2e_{g_r}(\CL)f + \frac{C( r f_r - f)}{1 + f_r^2} = 0
    \end{equation} in $I_r(\BS^n)$, where $e_{g_r}(\CL)$ is the energy density of $\CL: (\BS^n, g_r) \rightarrow (\BS^m, g_m)$.
\end{enumerate}

Moreover, suppose that $\ld_1, \cdots, \ld_n$ are singular values of $(\CL_*)_{\bx}: (T_{\bx}\BS^n, g_n) \rightarrow (T_{\CL(\bx)}\BS^m, g_m)$. Then condition (2) is equivalent to the equation $$\frac{f_{rr}}{1 + f_r^2} + \sum_{i = 1}^n \frac{r f_r - \ld_i^2 f}{r^2 + \ld_i^2 f ^2} + C (r f_r - f) = 0.$$
\end{thm}

\begin{proof}
For any $p \in \BN$, let $\iota_p: (\BS^p, g_p) \rightarrow (\BR^{p+1}, g_{\text{std}})$ be the natural isometric immersion. Throughout the proof, $\langle \cdot, \cdot \rangle$ denotes the standard Euclidean inner product. First, we write down $F$ carefully:  \begin{equation} \label{F}
    F(r, \bx) = (r X_1(\bx), f(r) X_2(\bx)) \in \BR^{n+1} \times \BR^{m+1},
\end{equation} where \begin{equation} \label{X1}
    X_1 = \iota_n \circ \text{Id}_{\BS^n}: (\BS^n, g_r) \rightarrow (\BR^{n+1}, g_{\text{std}})
\end{equation} and \begin{equation} \label{X2}
    X_2 = \iota_m \circ \CL: (\BS^n, g_r) \rightarrow (\BR^{m+1}, g_{\text{std}}).
\end{equation}

From now on, all the calculations are performed at a fixed point $(r, \bx) \in \BR_{> 0} \times \BS^n$. Let $\{v_i\}_{i = 1}^{n}$ be an SVD-basis of $(T_{\bx}\BS^n, g_n)$ with respect to $(\CL_*)_{\bx}$ such that each $v_i$ subjects to the singular value $\ld_i$. It follows that \begin{equation} \label{frame}
    \begin{aligned} \frac{\pl}{\pl r} &\coloneqq (X_1, f_r X_2) \\
    V_i &\coloneqq (F_*)_{(r, \bx)}(v_i) = (r v_i, f \cdot (\CL_*)_\bx v_i) \quad \forall \, 1 \leq i \leq n
    \end{aligned}
\end{equation} form an induced orthogonal basis of $(T_{(r, \bx)}\Sm, g).$ It is a straightforward computation to find that \begin{equation} \label{metric}
    \begin{aligned} g_{00} &\coloneqq \langle \frac{\pl}{\pl r}, \frac{\pl}{\pl r} \rangle = 1 + f_r^2 \\ 
    g_{0i} &\coloneqq \langle \frac{\pl}{\pl r}, V_i \rangle = 0 \\ 
    g_{ij} &\coloneqq \langle V_i, V_j \rangle = (r^2 + \ld_i^2 f^2) \dt_{ij} \quad \forall \, 1 \leq i, j \leq n \end{aligned}
\end{equation}
 and \begin{equation}
     \label{bracket}\nabla_{\frac{\pl}{\pl r}} V_i - \nabla_{V_i} \frac{\pl}{\pl r} = [\frac{\pl}{\pl r}, V_i] = 0.
 \end{equation} Here, we slightly abuse the notation for $\frac{\pl}{\pl r}$ and $V_i$'s. They also denote the extended local vector field near $(r, \bx)$. It is not hard to see that an extension satisfying \eqref{bracket} exists.

By \eqref{F}, \eqref{frame} and \eqref{metric} $F^\perp$ is equal to \begin{equation} \label{F^perp}
    \begin{aligned} F^\perp &= F - F^T \\
    &= F - g^{00} \langle F, \frac{\pl}{\pl r}\rangle \frac{\pl}{\pl r} - \sum_{i = 1}^n g^{ii} \langle F, V_i \rangle V_i \\
    &= (r X_1, f X_2) - \frac{r + f_rf}{1+f_r^2} (X_1, f_r X_2) \\
    &= \frac{- rf_r + f}{1+f_r^2} (-f_r X_1, X_2).
    \end{aligned}
\end{equation}

For mean curvature vector $H_\Sm$, note that \begin{equation} \label{mcv}
    H_\Sm = \Dt_g F = (\Dt_g(r X_1), \Dt_g(f X_2)).
\end{equation} We focus on the second component. Since $f$ is independent of $\bx$ and $X_2$ is independent of $r$, \eqref{metric} implies that \begin{equation} \label{Dt_g(f X_2)}
    \begin{aligned} \Dt_g(f X_2) &= (\Dt_g f) X_2 + f (\Dt_g X_2) + 2 ( g^{00} f_r \nabla_{\frac{\pl}{\pl r}} X_2 + \sum_{i=1}^{n} g^{ii} V_i(f) \nabla_{V_i} X_2) \\
    &= (\Dt_g f) X_2 + f (\Dt_g X_2).\end{aligned}
\end{equation}
Moreover, in the Riemannian submanifold $(\BS^n, g_r)$ of $(\Sm, g)$, we use \eqref{metric} to get \begin{equation} \label{Dt_g X_2}
    \begin{aligned}\Dt_g X_2 &= \Dt_{g_r} X_2 + g^{00} (\nabla_{\frac{\pl}{\pl r}} \nabla_{\frac{\pl}{\pl r}} X_2 - \nabla_{\nabla_{\frac{\pl}{\pl r}} \frac{\pl}{\pl r}} X_2) \\
    &= \Dt_{g_r} X_2 - g^{00} \nabla_{\nabla_{\frac{\pl}{\pl r}} \frac{\pl}{\pl r}} X_2 \\
    &= \Dt_{g_r} X_2 - g^{00} \sum_{i =1}^n g^{ii} \langle \nabla_{\frac{\pl}{\pl r}} \frac{\pl}{\pl r}, V_i \rangle \nabla_{V_i} X_2 \\
    &= \Dt_{g_r} X_2 + g^{00} \sum_{i =1}^n g^{ii} \langle \frac{\pl}{\pl r}, \nabla_{\frac{\pl}{\pl r}} V_i \rangle \nabla_{V_i} X_2 \\
    &= \Dt_{g_r} X_2 + g^{00} \sum_{i =1}^n g^{ii} \langle \frac{\pl}{\pl r}, \nabla_{V_i} \frac{\pl}{\pl r} \rangle \nabla_{V_i} X_2 \\
    &= \Dt_{g_r} X_2 + g^{00} \sum_{i =1}^n g^{ii} \frac{V_i(g_{00})}{2} \nabla_{V_i} X_2 \\
    &= \Dt_{g_r} X_2. \end{aligned}
\end{equation}
Recall that $\CL: (\BS^n, g_r) \rightarrow (\BS^m, g_m)$. According to  \eqref{X2}, \eqref{lp} and \eqref{compo}, we have \begin{equation} \label{Dt_(g_r)X_2}
    \Dt_{g_r} X_2 = \tau_{g_r}(X_2) = \tau_{g_r}(\iota_m \circ \CL) = \tau_{g_r}(\CL) - 2 e_{g_r}(\CL) X_2.
\end{equation} Therefore, \eqref{Dt_g(f X_2)}, \eqref{Dt_g X_2} and \eqref{Dt_(g_r)X_2} imply that \begin{equation} \label{sim}
    \Dt_g(f X_2) = f \cdot \tau_{g_r}(\CL) + (\Dt_g f - 2 e_{g_r}(\CL) f) X_2.
\end{equation}

Notice that $\tau_{g_r}(\CL) \in \Gm(\CL^*T\BS^m)$. In other words, $\langle \tau_{g_r}(\CL), X_2 \rangle = 0$. It follows from $H_\Sm = C F^\perp$, \eqref{F^perp} and \eqref{mcv} that $$\tau_{g_r}(\CL) = 0 \text{ and \eqref{eq} holds true}.$$

Conversely, if $\tau_{g_r}(\CL) = 0$ and \eqref{eq} holds true, it follows from \eqref{mcv} that \begin{equation} \label{mcv2}
    H_\Sm = (\Dt_g(r X_1), \frac{C(- rf_r + f)}{1+f_r^2} X_2).
\end{equation} We also recall that $H_\Sm \in N \Sm$. By using \eqref{frame} and \eqref{metric}, we have $$\begin{aligned} 0 &= \langle \frac{\pl}{\pl r}, H_\Sm \rangle \\
&= \langle (X_1, f_r X_2), (\Dt_g(r X_1), \frac{C(- rf_r + f)}{1+f_r^2} X_2) \rangle \\
&= \langle X_1, \Dt_g(r X_1) \rangle + f_r \cdot \frac{C(- rf_r + f)}{1+f_r^2}\end{aligned}$$ and $$\begin{aligned} 0 = \langle V_i, H_\Sm \rangle = \langle (r v_i, f \cdot (\CL_*)_\bx v_i), (\Dt_g(r X_1), \frac{C(- rf_r + f)}{1+f_r^2} X_2) \rangle = r \langle v_i, \Dt_g(r X_1) \rangle \end{aligned}.$$ Therefore, $\Dt_g(r X_1) = - f_r \cdot \frac{C(- rf_r + f)}{1+f_r^2} X_1 $; with \eqref{F^perp} and \eqref{mcv2}, $$H_\Sm = \frac{C(- rf_r + f)}{1+f_r^2} (- f_r X_1, X_2) = C F^\perp,$$ which means that $\Sm$ is a self-similar solution to the MCF.

Finally, we use \eqref{metric} to derive an explicit expression of $\Dt_g f - 2 e_{g_r}(\CL) f$. 
    $$\begin{aligned} \Dt_g f =& f_r \Dt_g r + f_{rr} |\text{grad}_g f|^2 \\
    =& f_r (g^{00} (\nabla_{\frac{\pl}{\pl r}} \nabla_{\frac{\pl}{\pl r}} r - \nabla_{\nabla_{\frac{\pl}{\pl r}} \frac{\pl}{\pl r}} r) + \sum_{i=1}^n g^{ii} (\nabla_{V_i} \nabla_{V_i} r - \nabla_{\nabla_{V_i} V_i} r)) + f_{rr} g^{00} \\
    =& f_r( -g^{00}\nabla_{\nabla_{\frac{\pl}{\pl r}} \frac{\pl}{\pl r}} r - \sum_{i=1}^n g^{ii} \nabla_{\nabla_{V_i} V_i} r) + \frac{f_{rr}}{1 + f_r^2} \\
    =& f_r ( - (g^{00})^2 \langle \nabla_{\frac{\pl}{\pl r}} \frac{\pl}{\pl r}, \frac{\pl}{\pl r} \rangle - \sum_{i =1}^n g^{ii} g^{00} \langle \nabla_{V_i} V_i, \frac{\pl}{\pl r} \rangle) + \frac{f_{rr}}{1 + f_r^2} \\
    =& f_r ( - \frac{\nabla_{\frac{\pl}{\pl r}} \langle \frac{\pl}{\pl r}, \frac{\pl}{\pl r} \rangle}{2 (1 + f_r^2)^2} + \sum_{i = 1}^n \frac{\nabla_{\frac{\pl}{\pl r}} \langle V_i, V_i \rangle}{2 (1 + f_r^2)(r^2 + \ld_i^2 f^2)}) + \frac{f_{rr}}{1 + f_r^2} \\
    =& \frac{f_{rr}}{(1 + f_r^2)^2} + \sum_{i =1}^n \frac{r f_r + \ld_i^2 f_r^2 f }{(1 + f_r^2)(r^2 + \ld_i^2 f^2)}\end{aligned}$$
 and 
    $$2 e_{g_r}(\CL) = \sum_{i = 1}^n \frac{g_n(\CL_*(v_i), \CL_*(v_i))}{g_r(v_i, v_i)} = \sum_{i = 1}^n \frac{\langle \CL_*(v_i), \CL_*(v_i) \rangle}{\langle V_i, V_i \rangle} = \sum_{i = 1}^n \frac{\ld_i^2}{r^2 + \ld^2_i f^2}.$$ 
It follows that \eqref{eq} is equivalent to $$\frac{f_{rr}}{1 + f_r^2} + \sum_{i =1}^n \frac{r f_r - \ld_i^2 f}{r^2 + \ld_i^2 f^2} + C(r f_r - f) = 0.$$ It finishes the proof of this theorem.
\end{proof}

Now, we consider $\CL$ to be an LOMSE of $(n, p, k)-$type and obtain a simple version of Theorem \ref{thm0}. It has been proved that LOMSEs automatically satisfy condition (1) (cf. \cite[Sec. 3.2]{XYZ19}). Moreover, for an LOMSE of $(n, p, k)-$type, it has only two constant singular values $$\ld = \sqrt{\frac{k(n+k-1)}{p}}$$ and 0 of constant multiplicities $p$ and $n-p$ respectively (cf. \cite[Theorem 2.8]{XYZ19}). Therefore, we have the following simple version.

\begin{cor}
Assume that $f > 0$ whenever $r > 0$ and that $\CL$ is an LOMSE of $(n, p, k)$-type. Then $\Sm$ is a self-similar solution to the MCF if and only if $f$ satisfies \begin{equation}\label{eq1}
    \frac{f_{rr}}{1 + f_r^2} + \frac{(n-p)f_r}{r} + \frac{p(r f_r - \ld^2 f)}{r^2 + \ld^2 f^2} + C (r f_r - f) = 0,
\end{equation} where $\ld = \sqrt{\frac{k(n+k-1)}{p}}$.
\end{cor}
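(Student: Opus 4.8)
The plan is to read this off as a direct specialization of Theorem \ref{thm0}, whose standing hypothesis $f > 0$ is already assumed here. Theorem \ref{thm0} characterizes self-similarity of $\Sm$ by two conditions and asserts that condition (2) is equivalent to the explicit ODE
$$\frac{f_{rr}}{1 + f_r^2} + \sum_{i = 1}^n \frac{r f_r - \ld_i^2 f}{r^2 + \ld_i^2 f^2} + C(r f_r - f) = 0,$$
where $\ld_1, \dots, \ld_n$ are the singular values of $(\CL_*)_\bx$. So the task splits into two pieces: first disposing of condition (1), and then evaluating the sum under the LOMSE hypothesis.

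For the first piece I would invoke the fact, recalled in the paragraph preceding the statement, that every LOMSE automatically satisfies condition (1), namely that $\CL: (\BS^n, g_r) \to (\BS^m, g_m)$ is harmonic for each $r$ (cf. \cite[Sec. 3.2]{XYZ19}). Consequently, for an LOMSE the self-similarity of $\Sm$ is equivalent to condition (2) alone, i.e. to the displayed ODE, and it remains only to simplify the sum.

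For the second piece, the key input is the singular value structure of an LOMSE of $(n, p, k)$-type: by \cite[Theorem 2.8]{XYZ19}, $(\CL_*)_\bx$ has exactly two distinct, $\bx$-independent singular values, namely $\ld = \sqrt{k(n + k - 1)/p}$ with multiplicity $p$ and $0$ with multiplicity $n - p$. I would therefore group the index set $\{1, \dots, n\}$ accordingly. The $n - p$ vanishing singular values each contribute $\frac{r f_r}{r^2} = \frac{f_r}{r}$, for a total of $\frac{(n - p) f_r}{r}$, while the $p$ singular values equal to $\ld$ each contribute $\frac{r f_r - \ld^2 f}{r^2 + \ld^2 f^2}$, for a total of $\frac{p(r f_r - \ld^2 f)}{r^2 + \ld^2 f^2}$. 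Substituting these two totals back into the sum yields precisely the claimed equation.

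I expect no serious obstacle here; the argument is a routine substitution and the only points needing care are bookkeeping. One should confirm that the singular values entering Theorem \ref{thm0} are normalized exactly as in \cite[Theorem 2.8]{XYZ19}, so that the nonzero value is indeed $\ld = \sqrt{k(n + k - 1)/p}$ (this is why the value of $\ld$ is recorded in the statement), and one should observe that the collapse $\frac{r f_r}{r^2} = \frac{f_r}{r}$ handles the degenerate terms cleanly away from $r = 0$. With these checks in place, the equivalence follows immediately.
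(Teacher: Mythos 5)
Your proposal is correct and follows exactly the paper's own reasoning: the paper likewise disposes of condition (1) by citing that LOMSEs are automatically harmonic (\cite[Sec. 3.2]{XYZ19}) and then splits the sum in Theorem \ref{thm0} according to the two constant singular values $\ld$ (multiplicity $p$) and $0$ (multiplicity $n-p$) from \cite[Theorem 2.8]{XYZ19}. Nothing is missing.
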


\begin{rmk}
Suppose that $C = 0$. Then $$\frac{f_{rr}}{1 + f_r^2} + \frac{(n-p)f_r}{r} + \frac{p(r f_r - \ld^2 f)}{r^2 + \ld^2 f^2} = 0$$ is a necessary and sufficient condition for $\Sm$ to be a minimal graph (cf. \cite[Theorem 3.2]{XYZ19}). Moreover, for $(n, p, k) = (3, 2, 2), (7, 4, 2), (15, 8, 2)$, the equation was first found by Ding and Yuan (cf. \cite[Sec. 2]{DY06}).
\end{rmk}


\section{\textbf{A stable curve theorem for a non-autonomous planar system}}

The following section is essentially based on the author's master thesis (cf. \cite[Sec. 4]{Lee22}). For the sake of completeness, we provide the details here.

Throughout this section, we consider the following system of ODEs: \begin{equation}\label{eq2}\left\{\begin{aligned} X_t(t) &= -\kp X(t) + f_1(X(t), Y(t)) + e^{-t} g_1(X(t), Y(t)) \\
Y_t(t) &= \mu Y(t) + f_2(X(t), Y(t)) + e^{-t} g_2(X(t), Y(t)) \end{aligned}\right.,\end{equation} where $\mu > 0 > - \kp$, $\frac{f_i(X, Y)}{\sqrt{X^2 + Y^2}} \rightarrow 0$ as $(X, Y) \rightarrow (0, 0)$ and $g_i \in O(\sqrt{X^2 + Y^2})$ as $(X, Y) \rightarrow (0, 0)$ $\forall \, i = 1, 2$. In this case, $(0, 0)$ is an equilibrium.

If we omit the exponential terms, then it becomes a classical planar autonomous system. Under that situation, $(0, 0)$ is in fact a saddle equilibrium. There is a stable curve theorem for such case (cf. \cite[Chap. 8.3, p.169]{HSD13}), which states that we can find an $\vep > 0$ and a unique \emph{local} stable curve of the form $Y = h(X)$ that is defined for $|X| < \vep$ and satisfies $h(0) = 0$. Moreover, this curve is tangent to the $X$-axis and all solutions with initial conditions that lies on this curve tend to the origin as $t \rightarrow \infty$.

This section aims to provide a similar stable curve theorem for the system \eqref{eq2}. We first give some notations to clarify the meaning of \emph{local}. Let $S_\vep$ be the square bounded by $\{X = \pm \vep\}$ and $\{Y = \pm \vep\}$. We also define \begin{equation}
    R_{M_1, M_2, \vep}^+ \coloneqq \{(X, Y) \in S_\vep \, \big| \, - M_2 X \leq Y \leq M_1 X, X \geq 0\}
\end{equation} and \begin{equation}
    E_{M_1, M_2, \vep}^+ \coloneqq R_{M_1, M_2, \vep}^+ \cap \{X = \vep\}.
\end{equation} Then $E_{M_1, M_2, \vep}^+$ is a part of the boundary of $R_{M_1, M_2, \vep}^+$.

Now, we state two lemmas about the behavior of the vector field inside $R_{M_1, M_2, \vep}^+$.

\begin{lem}
\label{lem1}
Given any $M_1, M_2 > 0$ and $\dt \in (0, \kp)$, there exists $\vep > 0$ and $T > 0$ such that $X_t < - (\kp - \dt) X < 0$ in $R_{M_1, M_2, \vep}^+$ whenever $t \geq T$.
\end{lem}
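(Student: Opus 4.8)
The plan is to show that throughout the thin cone $R_{M, \vep}^+$ the linear term $-\kp X$ dominates, so that the full velocity $X_t$ inherits its negative sign once $\vep$ is taken small (to control $f_1$) and $t$ is taken large (to control the exponentially decaying perturbation $e^{-t} g_1$). The geometric point that makes every estimate clean is that on $R_{M, \vep}^+$ one has $X \geq 0$ and $|Y| \leq M X$, whence $\sqrt{X^2 + Y^2} \leq \sqrt{1 + M^2}\, X$; this converts any bound phrased in terms of the radius $\sqrt{X^2+Y^2}$ into a bound that is linear in $X$ alone.

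First I would invoke the hypothesis $f_1(X, Y)/\sqrt{X^2 + Y^2} \to 0$ to select $\vep_1 > 0$ with $|f_1| \leq \frac{\dt}{2\sqrt{1 + M^2}} \sqrt{X^2 + Y^2}$ on $S_{\vep_1}$, so that the cone estimate upgrades this to $|f_1| \leq \frac{\dt}{2} X$ on $R_{M, \vep_1}^+$. Next, the hypothesis $g_1 \in O(\sqrt{X^2 + Y^2})$ supplies a constant $C > 0$ and an $\vep_2 > 0$ with $|g_1| \leq C \sqrt{X^2 + Y^2} \leq C \sqrt{1 + M^2}\, X$ on $R_{M, \vep_2}^+$. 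Putting $\vep = \min(\vep_1, \vep_2)$ and combining these, for every $(X, Y) \in R_{M, \vep}^+$ I obtain
$$X_t \leq -\kp X + \frac{\dt}{2} X + e^{-t} C \sqrt{1 + M^2}\, X = X \Bigl( -\kp + \tfrac{\dt}{2} + e^{-t} C \sqrt{1 + M^2} \Bigr).$$

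Finally I would pick $T > 0$ so large that $e^{-T} C \sqrt{1 + M^2} \leq \frac{\dt}{2}$; then for all $t \geq T$ the parenthesis is at most $-\kp + \dt = -(\kp - \dt) < 0$, which gives the quantitative bound $X_t \leq -(\kp - \dt) X$. Since $X = 0$ forces $Y = 0$ on $R_{M, \vep}^+$ (a single point at which $X_t = 0$ trivially), this is precisely $X_t < 0$ wherever $X > 0$, which is the assertion. I do not expect a genuine obstacle here: the content is merely that the saddle's contracting direction governs the motion inside a sufficiently thin and sufficiently late cone, and the only care needed is to reconcile the two smallness radii into one $\vep$ and to note the harmless degeneracy at the origin. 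The reason for carrying the parameter $\dt$, rather than simply asserting $X_t < 0$, is that the sharper estimate $X_t \leq -(\kp - \dt) X$ records a definite exponential contraction rate that the subsequent construction of the stable curve will rely on.
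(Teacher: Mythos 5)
Your proof is correct and follows essentially the same route as the paper's: use the cone inequality $\sqrt{X^2+Y^2}\leq\sqrt{M^2+1}\,X$ to absorb both the sublinear term $f_1$ (via small $\vep$) and the decaying term $e^{-t}g_1$ (via large $T$) into $\frac{\dt}{2}X$ each, yielding $X_t\leq-(\kp-\dt)X$. Your explicit remark about the degeneracy at the origin, where the estimate only gives $X_t=0$, is a small point of care that the paper's statement glosses over, but it does not change the argument.
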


\begin{proof}
Let $M \coloneqq \max\{M_1, M_2\}$. Since $\frac{f_1(X, Y)}{\sqrt{X^2 + Y^2}} \rightarrow 0$ as $(X, Y) \rightarrow (0, 0)$, we may choose $\vep_1 > 0$ so that $$|f_1(X, Y)| \leq \frac{\dt}{2 \sqrt{M^2+1}} \sqrt{X^2 + Y^2} \quad \forall \, (X, Y) \in S_{\vep_1}.$$
Moreover, since $g_1 \in O(\sqrt{X^2 + Y^2})$, there exist $\vep_2 > 0$ such that $$|g_1(X, Y)| < \tilde{C} \sqrt{X^2 + Y^2} \quad \forall \, (X, Y) \in S_{\vep_2}$$ for some positive constant $\tilde{C}$. Let $\vep = \min\{\vep_1, \vep_2\}$. Now, we set $T > 0$ so that $$e^{-t} < \frac{\dt}{2 \tilde{C} \sqrt{M^2+1}} \quad \forall \, t > T.$$ Note that in $R_{M_1, M_2, \vep}^+$, $|Y| \leq M X$, and thus $\sqrt{X^2 + Y^2} \leq \sqrt{M^2+1} X$. It follows that $$\begin{aligned}
X_t &= -\kp X + f_1(X, Y) + e^{-t} g_1(X, Y) \\
&\leq -\kp X + |f_1(X, Y)| + e^{-t} |g_1(X, Y)| \\
&\leq -\kp X + \frac{\dt}{2 \sqrt{M^2+1}} \sqrt{X^2 + Y^2} + \frac{\dt}{2 \tilde{C} \sqrt{M^2+1}} (\tilde{C} \sqrt{X^2 + Y^2}) \\
&\leq -\kp X + \frac{\dt}{2 \sqrt{M^2+1}} \sqrt{M^2+1} X + \frac{\dt}{2 \tilde{C} \sqrt{M^2+1}} (\tilde{C} \sqrt{M^2+1} X) \\
&= - (\kp - \dt) X < 0
\end{aligned}$$ whenever $t \geq T$. It finishes the proof of this lemma.
\end{proof}

\begin{lem}
\label{lem2}
Given any $M_1, M_2 > 0$, there exists $\vep > 0$ and $T > 0$ such that $Y_t > 0$ on $\{(X,Y) \in R_{M_1, M_2, \vep}^+ \, \big| \, Y = M_1 X, X > 0 \}$ and $Y_t < 0$ on $\{(X,Y) \in R_{M_1, M_2, \vep}^+ \, \big| \, Y = - M_2 X, X > 0 \}$ whenever $t \geq T$.
\end{lem}

\begin{proof}
Let $M \coloneqq \max\{M_1, M_2\}$ and $m \coloneqq \min\{M_1, M_2\}$. Since $\frac{f_2(X, Y)}{\sqrt{X^2 + Y^2}} \rightarrow 0$ as $(X, Y) \rightarrow (0, 0)$, we may choose $\vep_1 > 0$ so that $$|f_2(X, Y)| \leq \frac{m \mu}{3 \sqrt{M^2+1}} \sqrt{X^2 + Y^2} \quad \forall \, (X, Y) \in S_{\vep_1}.$$
Furthermore, since $g_2 \in O(\sqrt{X^2 + Y^2})$, there exist $\vep_2 > 0$ such that $$|g_2(X, Y)| < \tilde{C} \sqrt{X^2 + Y^2} \quad \forall \, (X, Y) \in S_{\vep_2}$$ for some positive constant $\tilde{C}$. Let $\vep = \min\{\vep_1, \vep_2\}$ and choose $T > 0$ so that $$e^{-t} < \frac{m \mu}{3 \tilde{C} \sqrt{M^2+1}} \quad \forall \, t > T.$$ Then on $\{(X,Y) \in R_{M_1, M_2, \vep}^+ \, \big| \, Y = M_1 X, X > 0 \}$, $$\begin{aligned}
Y_t &= \mu Y + f_2(X, Y) + e^{-t} g_2(X, Y) \\
&\geq \mu Y - |f_2(X, Y)| - e^{-t} |g_2(X, Y)| \\
&\geq \mu Y - \frac{m \mu}{3 \sqrt{M^2+1}} \sqrt{X^2 + Y^2} - \frac{m \mu}{3 \tilde{C} \sqrt{M^2+1}} (\tilde{C} \sqrt{X^2 + Y^2}) \\
&\geq \mu Y - \frac{m \mu}{3 \sqrt{M^2+1}} \sqrt{M_1^{-2}+1} Y - \frac{m \mu}{3 \tilde{C} \sqrt{M^2+1}} (\tilde{C} \sqrt{M_1^{-2}+1} Y) \\
&\geq \frac{\mu}{3} Y > 0
\end{aligned}$$ whenever $t \geq T$. Similarly, on $\{(X,Y) \in R_{M_1, M_2, \vep}^+ \, \big| \, Y = - M_2 X, X > 0 \}$, $$\begin{aligned}
Y_t &= \mu Y + f_2(X, Y) + e^{-t} g_2(X, Y) \\
&\leq \mu Y + |f_2(X, Y)| + e^{-t} |g_2(X, Y)| \\
&\leq \mu Y - \frac{m \mu}{3 \sqrt{M^2+1}} \sqrt{M_2^{-2}+1} Y - \frac{m \mu}{3 \tilde{C} \sqrt{M^2+1}} (\tilde{C} \sqrt{M_2^{-2}+1} Y) \\
&\leq \frac{\mu}{3} Y < 0
\end{aligned}$$ whenever $t \geq T$. It finishes the proof of this lemma.
\end{proof}

Using the above lemmas, we are able to show the existence of the stable curve for the system \eqref{eq2}; however, unlike the classical one, it depends on the initial time.

\begin{thm}
\label{thm1}
Given a system of ODEs $$\left\{\begin{aligned} X_t(t) &= -\kp X(t) + f_1(X(t), Y(t)) + e^{-t} g_1(X(t), Y(t)) \\
Y_t(t) &= \mu Y(t) + f_2(X(t), Y(t)) + e^{-t} g_2(X(t), Y(t)) \end{aligned}\right.,$$ where $\mu > 0 > - \kp$, $\frac{f_i(X, Y)}{\sqrt{X^2 + Y^2}} \rightarrow 0$ as $(X, Y) \rightarrow (0, 0)$ and $g_i \in O(\sqrt{X^2 + Y^2})$ as $(X, Y) \rightarrow (0, 0)$ $\forall \, i = 1, 2$. Then for all positive $M_1, M_2$ and $\dt \in (0, \kp)$, there exist positive constants $\vep_0$ and $T_0$ with the following significance: for any $\vep \in (0, \vep_0]$ and $T \in [T_0, \infty)$, there is a solution curve $(X(t), Y(t))$ defined on $t \in [T, \infty)$ with initial condition $X(T) = \vep$ and $(X, Y) \rightarrow (0, 0)$ as $t \rightarrow \infty$. Furthermore, $$\left\{\begin{aligned}
    0 \leq X &\leq \vep \\
    -M_2 X \leq Y &\leq M_1 X
\end{aligned}\right. \quad \text{for all } t \in [T, \infty)$$ and $X, Y \in O(e^{- (\kp-\dt)t})$ as $t \rightarrow \infty$.
\end{thm}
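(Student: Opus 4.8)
The plan is to run a topological shooting argument on the family of solutions issuing from the right edge $E_{M,\vep}^+$, using Lemmas \ref{lem1} and \ref{lem2} to turn the cone $R_{M,\vep}^+$ into a region from which trajectories can escape only through its two slanted sides, and then to extract by connectedness a trajectory that never escapes.

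First I would fix $M$ and $\dt$ and produce the constants. Applying Lemma \ref{lem1} (with this $\dt$) and Lemma \ref{lem2}, each yields a pair $(\vep, T)$; I would set $\vep_0$ to be the smaller of the two $\vep$'s and $T_0$ the larger of the two $T$'s. Since $R_{M,\vep}^+ \subseteq R_{M,\vep_0}^+$ whenever $\vep \le \vep_0$, for every $\vep \in (0,\vep_0]$ and every $T \ge T_0$ both conclusions hold simultaneously on $R_{M,\vep}^+$ for all $t \ge T$: one has $X_t \le -(\kp - \dt)X < 0$ throughout the cone, $Y_t > 0$ on the upper edge $\{Y = MX\}$, and $Y_t < 0$ on the lower edge $\{Y = -MX\}$. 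After possibly replacing $M$ by $\min\{M,1\}$, which only strengthens the desired conclusion $|Y(T)| \le M\vep$, I may assume the cone lies inside the square $S_\vep$, so that its only boundary pieces are the two slanted edges and the vertex at the origin.

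Then I would fix $\vep$ and $T$ and parametrize the initial data by $X(T) = \vep$, $Y(T) = s$ with $s \in [-M\vep, M\vep]$, so that each starting point lies in $E_{M,\vep}^+ \subseteq R_{M,\vep}^+$. While a trajectory remains in the cone it stays bounded, hence persists, and because $X_t < 0$ there its $X$-coordinate strictly decreases; it therefore cannot leave through the right edge, nor reach $X = 0$ in finite time (the latter would force $|Y| \le MX = 0$, i.e. arrival at the equilibrium in finite time, which uniqueness of solutions forbids). Consequently the only way to leave $R_{M,\vep}^+$ is across one of the slanted edges, and by Lemma \ref{lem2} such crossings are transversal: on $\{Y = MX\}$ one has $\frac{d}{dt}(Y - MX) = Y_t - M X_t = Y_t + M|X_t| > 0$, and symmetrically on the lower edge. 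I would define $A^+$ (resp. $A^-$) to be the set of parameters $s$ whose trajectory eventually crosses the upper (resp. lower) slanted edge. Transversality together with continuous dependence on initial conditions makes $A^+$ and $A^-$ relatively open in $[-M\vep, M\vep]$; they are clearly disjoint; and the endpoints satisfy $M\vep \in A^+$ and $-M\vep \in A^-$, since those trajectories start on a slanted edge and exit at once. Because $[-M\vep, M\vep]$ is connected and cannot be written as a disjoint union of the two nonempty relatively open sets $A^+$ and $A^-$, there is a parameter $s^*$ lying in neither, whose trajectory never leaves $R_{M,\vep}^+$ and is thus defined for all $t \ge T$ with $|Y(t)| \le M X(t)$. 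For this trajectory the inequality $X_t \le -(\kp - \dt)X$ holds for all $t \ge T$, so Gr\"onwall gives $0 < X(t) \le \vep\, e^{-(\kp - \dt)(t - T)}$, whence $X \to 0$ and, by $|Y| \le M X$, also $Y \to 0$; both are $O(e^{-(\kp - \dt)t})$ as claimed.

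I expect the main obstacle to be the shooting/connectedness step — specifically, verifying rigorously that the two escape sets are open and that ``non-escape'' genuinely forces the trajectory to remain in the cone for all time and to converge to the origin, rather than exiting through the vertex or reaching $X = 0$ prematurely. The nonautonomous factor $e^{-t}$ is handled entirely through the threshold $T_0$ furnished by the lemmas, which is precisely why, in contrast with the classical stable manifold theorem, the stable solution obtained here must depend on the initial time $T$.
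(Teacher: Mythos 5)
Your proposal is correct and follows essentially the same route as the paper: a shooting/connectedness argument on the edge $E_{M,\vep}^+$ using Lemmas \ref{lem1} and \ref{lem2} to show the two escape sets are open, disjoint and nonempty, followed by Gr\"onwall for the decay rate. Your additional care about transversality of the exit crossings, the impossibility of reaching $X=0$ in finite time, and the reduction to $M\le 1$ so that the cone's only exit boundary is the pair of slanted edges only tightens steps the paper leaves implicit.
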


\begin{proof}
Given any $M_1, M_2 > 0$ and $\dt \in (0, \kp)$, there exist positive constants $\vep_0$ and $T_0$ such that Lemma \ref{lem1} and Lemma \ref{lem2} hold for all $\vep \in (0, \vep_0]$ and $T \in [T_0, \infty)$. In other words, fixing $\vep \in (0, \vep_0]$ and $T \in [T_0, \infty)$, Lemma \ref{lem1} implies that those solutions with initial conditions $(X(T), Y(T)) \in E_{M_1, M_2, \vep}^+$ strictly decrease in the $X$-direction when they remain in $R_{M_1, M_2, \vep}^+$. In particular, such solutions can remain in $R_{M_1, M_2, \vep}^+$ for all $t > T$ only if they tend to $(0, 0)$. 

On the other hand, Lemma \ref{lem2} shows that there is a set of initial conditions $\{(X(T), Y(T))\} \subset E_{M_1, M_2, \vep}^+$ with solutions that eventually exit $R_{M_1, M_2, \vep}^+$ to the top. Similarly, there also exists another set of initial conditions $\{(X(T), Y(T))\} \subset E_{M_1, M_2, \vep}^+$ with solutions that eventually exit $R_{M_1, M_2, \vep}^+$ to the below. Due to the smooth dependence of initial conditions, these two sets are both open intervals. Note that $E_{M_1, M_2, \vep}^+$ is connected. We therefore conclude that there exists a nonempty set of initial conditions $\{(X(T), Y(T))\} \subset E_{M_1, M_2, \vep}^+$ such that the solutions never leave $R_{M_1, M_2, \vep}^+$. That is to say, the solutions tend to $(0, 0)$ as $t \rightarrow \infty$.

Moreover, whenever $t \geq T$, since $X_t \leq - (\kp-\dt) X$ by Lemma \ref{lem1}, the Grönwall's inequality shows that \begin{equation} \label{cond3}
    X(t) \leq X(T) e^{- (\kp-\dt) (t - T)} = \tilde{C} e^{- (\kp-\dt) t},
\end{equation} where $\tilde{C} = \vep e^{(\kp-\dt)T}$ is a positive constant. That is to say, $X \in O(e^{ - (\kp-\dt) t})$ as $t \rightarrow \infty$. It follows from $- M_2 X \leq Y \leq M_1 X$ that $Y \in O(e^{ - (\kp-\dt) t})$ as $t \rightarrow \infty$.
\end{proof}

\begin{rmk}
The aforementioned arguments also work on $$R_{M_1, M_2, \vep}^- \coloneqq \{(X, Y) \in S_\vep \, \big| \, M_1 X \leq Y \leq - M_2 X, X \leq 0\}$$ and $$E_{M_1, M_2, \vep}^- \coloneqq R_{M_1, M_2, \vep}^- \cap \{X = -\vep\}.$$ In other words, there is also a stable curve in the half plane $\{X \leq 0\}$.
\end{rmk}


\section{\textbf{The existence of self-expanders}}

Given a constant $C = 1 \text{ or } -1$, the self-similar solution ($C = 1$ is self-expander and $C = -1$ is self-shrinker) is characterized by the equation \eqref{eq1}). Let $t \coloneqq \log r$, $\vph \coloneqq \frac{f}{r}$ and $\psi \coloneqq \vph_t$. Then $f_r = \vph + \psi$ and $f_{rr} = \frac{1}{r}(\psi_t + \psi) = e^{-t} (\psi_t + \psi)$. We can therefore convert the second order ODE to the following system of first order ODEs: \begin{equation} \label{eq3}
    \left\{\begin{aligned} \vph_t &= \psi \\
\psi_t &= -\psi - ((n - p + \frac{p}{1 + \ld^2 \vph^2} + C e^{2t})\psi + (n - p + \frac{(1- \ld^2)p}{1 + \ld^2 \vph^2})\vph)(1+(\vph+\psi)^2) \end{aligned}\right..
\end{equation}

Note that the system \eqref{eq3} has exactly two equilibria $(0, 0)$ and $(\vph_0, 0)$, where $\vph_0 = \sqrt{\frac{p \ld^2 -n }{(n - p) \ld^2}}$, in the half plane $\{\vph \geq 0\}$. If we ignore the exponential term, considering the remaining autonomous system, then at $(0, 0)$, the linearized system looks like $$\begin{pmatrix}
0 & 1 \\
p \ld^2 - n & -n -1 \end{pmatrix} = \begin{pmatrix}
0 & 1 \\
k(n+k-1) - n & -n -1 \end{pmatrix}$$ with eigenvalues $\tilde{\ld}_1 = k - 1, \tilde{\ld}_2 = - n - k$ and associated eigenvector $V_1 = \begin{pmatrix}
1 \\
k-1 \end{pmatrix}, V_2 = \begin{pmatrix}
1 \\
-n-k \end{pmatrix}$. It implies that $(0, 0)$ is a saddle. At $(\vph_0, 0)$, the linearized system looks like $$\begin{pmatrix}
0 & 1 \\
2n(\frac{n}{p\ld^2} - 1) & -n -1 \end{pmatrix} = \begin{pmatrix}
0 & 1 \\
2n(\frac{n}{k(n+k-1)} - 1) & -n -1 \end{pmatrix}$$ with eigenvalues $\tilde{\ld} = -\frac{n+1}{2} \pm \frac{1}{2} \sqrt{n^2 - 6n + 1 + \frac{8 n^2}{k(k+n-1)}}$. When $n = 3, k \geq 4$ or $n = 5, k \geq 6$, $\tilde{\ld}$ are complex numbers with negative real parts. Hence, \begin{enumerate}
    \item If $(n, p, k) = (3, 2, 2), (5, 4, 2), (5, 4, 4)$ or $n \geq 7$, then $(\vph_0, 0)$ is a sink.
    \item If $(n, p) = (3, 2), k \geq 4$ or $(n, p) = (3, 2), k \geq 6$, then $(\vph_0, 0)$ is a spiral sink.
\end{enumerate}

Here, we aim to construct a compact positive invariant set in the first quadrant.

\begin{prop}
\label{prop2}
 For the self-expander case, i.e. $C = 1$, we have the following:
 \begin{enumerate}
     \item If $(n, p, k) = (3, 2, 2), (5, 4, 2), (5, 4, 4)$, then the compact region $\Dt$ enclosed by $\{\psi = 0\}$ and the graph of $$g(\vph) = \frac{3(\frac{(\ld^2 - 1)p}{1+\ld^2 \vph^2} - (n - p))\vph}{2(n - p)}$$ is a positive invariant set of the system \eqref{eq3}.
     \item If $n \geq 7$, then the compact region $\Dt$ enclosed by $\{\psi = 0\}$ and the graph of $$g(\vph) = \frac{2(\frac{(\ld^2 - 1)p}{1+\ld^2 \vph^2} - (n - p))\vph}{(n - p)}$$ is a positive invariant set of the system \eqref{eq3}.
 \end{enumerate}
\end{prop}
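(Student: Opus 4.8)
The plan is to prove positive invariance by the standard flow-trapping criterion: a compact region $\Dt$ with piecewise-smooth boundary is positively invariant for the (non-autonomous) system provided that, along every smooth arc of $\pl\Dt$ and for every time $t$ under consideration, the velocity field $(\vph_t,\psi_t)$ either points into $\Dt$ or is tangent to $\pl\Dt$. Here $\pl\Dt$ consists of two arcs, a segment of the axis $\{\psi=0\}$ and the graph $\{\psi=g(\vph)\}$, and the two arcs meet at the equilibria $(0,0)$ and $(\vph_0,0)$. Since the field vanishes at an equilibrium, no trajectory can leave through these corner points, so it suffices to check the two open arcs. As a preliminary step I would confirm the region is well-posed, i.e. that the graph of $g$ meets the axis exactly at $\vph=0$ and $\vph=\vph_0$ and stays on one side in between; the identity $g(\vph_0)=0$ should follow from the defining relation for $\vph_0$, so that the curve closes up at the sink.

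On the axis arc $\{\psi=0\}$ we have $\vph_t=\psi=0$, so the field is vertical and the exponential term $Ce^{2t}\psi$ drops out entirely; the relevant quantity is
$$\psi_t\big|_{\psi=0}=-\Big((n-p)+\tfrac{(1-\ld^2)p}{1+\ld^2\vph^2}\Big)\vph\,(1+\vph^2).$$
Writing $A(\vph):=(n-p)+\tfrac{(1-\ld^2)p}{1+\ld^2\vph^2}$, I would first record that $A$ is monotone in $\vph$ and that $A(\vph_0)=0$, so $A$ has a fixed sign on $(0,\vph_0)$; this pins down the sign of $\psi_t$ on the whole segment and shows the field points consistently to the interior side of $\Dt$, for all $t$. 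On the curved arc $\{\psi=g(\vph)\}$ I would use the test function $\Phi:=\psi-g(\vph)$, for which $\Dt$ lies on one side of $\{\Phi=0\}$, and compute $\dot\Phi=\psi_t-g'(\vph)\,\vph_t=\psi_t-g'(\vph)\,\psi$ on the arc. The key observation is that for the self-expander $C=1$ the exponential contribution to $\psi_t$ is $-e^{2t}\psi\,(1+(\vph+\psi)^2)$, which has a definite sign along this arc; consequently the worst case is the autonomous limit $e^{2t}\to0$, and it is enough to verify the differential inequality for $\Phi$ with the exponential term deleted.

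This reduces everything to a single autonomous inequality along $\{\psi=g(\vph)\}$, namely a comparison between the autonomous value of $\psi_t$ and $g'(\vph)g(\vph)$; after substituting $\psi=g(\vph)$ and clearing the denominators $1+\ld^2\vph^2$ this becomes an explicit polynomial inequality in $\vph$ on $[0,\vph_0]$. I expect this to be the main obstacle: the multiplicative constant in $g$ (that is, $\tfrac32$ when $(n,p,k)=(3,2,2),(5,4,2),(5,4,4)$ and $2$ when $n\ge7$) is chosen precisely so that this inequality holds, and the two regimes are separated because the estimate needed for large $n$ differs from the one for the three sporadic triples. Concretely I would prove the polynomial inequality by evaluating at the endpoints $\vph=0,\vph_0$ and controlling the interior either by a monotonicity argument or by crude term-by-term bounds, using $\ld^2=\tfrac{k(n+k-1)}{p}$ and $\ld^2\vph_0^2=\tfrac{p\ld^2-n}{n-p}$ to keep the algebra manageable. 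Once this inequality is established on the $g$-arc and the sign of $A$ is settled on the axis, the flow-trapping criterion yields that $\Dt$ is positively invariant.
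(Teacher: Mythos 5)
Your overall strategy is exactly the paper's: check that the field points inward (or is tangent) along the two boundary arcs, note that the corners $(0,0)$ and $(\vph_0,0)$ are equilibria of the full non-autonomous system (the $e^{2t}$ term is multiplied by $\psi$, which vanishes there), settle the sign of $\psi_t$ on the axis from the sign of $n-p+\tfrac{(1-\ld^2)p}{1+\ld^2\vph^2}$ on $(0,\vph_0)$, and on the graph arc observe that for $C=1$ the exponential contribution $-e^{2t}\psi\,(1+(\vph+\psi)^2)$ is nonpositive because $\psi=g(\vph)\ge 0$ there, so that it suffices to verify the autonomous inequality $g'(\vph)\,X_1 - X_2 \ge 0$, i.e. $g'(\vph)\ge \tfrac{X_2}{X_1}(\vph,g(\vph))$. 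This matches the paper's two conditions and its inequality chain step for step. Your preliminary check that $g(\vph_0)=0$ is also a worthwhile sanity check (and worth carrying out explicitly: for the graph to close up at $(\vph_0,0)$ and to have $g'(0)>k-1$, the numerator must carry the factor $\ld^2-1$ rather than $1-\ld^2$).

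The one genuine gap is the step you yourself flag as the main obstacle: the autonomous polynomial inequality along $\{\psi=g(\vph)\}$, which is where the constants $\tfrac32$ and $2$ and the case split between the three sporadic triples and $n\ge 7$ actually enter. Your plan for it (evaluate at the endpoints, then control the interior by monotonicity or term-by-term bounds) is not yet an argument; this inequality is genuinely delicate, and the paper does not prove it either but imports it verbatim from Xu--Yang--Zhang \cite[Sec.~4.3]{XYZ19}, where it occupies a substantial computation. So your proposal is, in effect, the paper's proof with the citation replaced by an unexecuted computation. To complete it you must either carry out that polynomial estimate in each case or cite the reference; everything else in your outline is correct and coincides with the paper.
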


\begin{proof}
It suffices to check the following two conditions:
\begin{enumerate} 
\item $\psi_t \geq 0$ on $\{ (\vph, 0) \, \big| \, 0 \leq \vph \leq \vph_0\}$.
\item $\langle (\vph_t, \psi_t), ( g'(\vph), - 1) \rangle \geq 0$ on $\{ (\vph, g(\vph)) \, \big| \, 0 \leq \vph \leq \vph_0\}$, where $( g'(\vph), - 1)$ is the inner normal of the graph of $g$ and $\langle \cdot, \cdot \rangle$ denotes the standard inner product on $\BR^2$.
\end{enumerate}

The first one is clear, and the second one is typically based on the results by Xu, Yang and Zhang (cf. \cite[Sec. 4.3]{XYZ19}). For each case, they have proved that $$g'(\vph) > \frac{X_2}{X_1}(\vph, g(\vph)) \quad \forall \, \vph \in (0, \vph),$$ where \begin{equation}
    \left\{\begin{aligned} X_1(\vph, \psi) &= \psi \\
X_2(\vph, \psi) &= -\psi - ((n - p + \frac{p}{1 + \ld^2 \vph^2})\psi + (n - p + \frac{(1- \ld^2)p}{1 + \ld^2 \vph^2})\vph)(1+(\vph+\psi)^2) \end{aligned}\right..
\end{equation}
Then condition (2) follows from direct calculations: $$\begin{aligned}\langle (\vph_t, \psi_t), ( g'(\vph), - 1) \rangle &= g'(\vph) \psi + \psi + (1+(\vph+\psi)^2)\\
&\quad \cdot ((n - p + \frac{p}{1 + \ld^2 \vph^2} + e^{2t})\psi + (n - p + \frac{(1- \ld^2)p}{1 + \ld^2 \vph^2})\vph) \\
&\geq g'(\vph) \psi + \psi + (1+(\vph+\psi)^2) \\
&\quad \cdot ((n - p + \frac{p}{1 + \ld^2 \vph^2})\psi + (n - p + \frac{(1- \ld^2)p}{1 + \ld^2 \vph^2})\vph) \\
&= g'(\vph) X_1 - X_2 \\
&\geq 0
\end{aligned}$$ on $\{ (\vph, g(\vph)) \, \big| \, 0 \leq \vph \leq \vph_0\}.$ It finishes the proof of this proposition.
\end{proof}

\begin{rmk}
The tricky point is that we need to find a function $g$ with $$g'(0) > k - 1 > 0$$ in order to apply Theorem \ref{thm1} near $(0, 0)$ in $\Dt$. In this case, \begin{equation} \label{slope}
    g'(0) = \left\{\begin{aligned}
    &\frac{3(n-1)}{2(n-p)}(k-1) \quad \text{if } (n, p, k) = (3, 2, 2), (5, 4, 2), (5, 4, 4) \\
    &\frac{2(n-1)}{n-p}(k-1) \quad \text{if } n \geq 7
    \end{aligned}\right..
\end{equation}
\end{rmk}

\begin{proof}[Proof of Main Theorem and properties \eqref{result1} and \eqref{result2}]
From now on, we consider only $(n, p, k) = (3, 2, 2)$, $(5, 4, 2)$, $(5, 4, 4)$ or $n \geq 7$ and the self-expander case $C=1$.

Let $X(t) \coloneqq \frac{n + k}{n + 2k - 1} \vph(-t) + \frac{1}{n + 2k - 1}\psi(-t)$, $Y(t) \coloneqq \frac{k - 1}{n + 2k - 1} \vph(-t) - \frac{1}{n + 2k - 1} \psi(-t)$. Then the system \eqref{eq3} changes into the form $$\left\{\begin{aligned} 
X_t &= (1 - k) X + O(X^2 + Y^2) + e^{-2t} \cdot O(\sqrt{X^2 + Y^2}) \\
Y_t &= (n + k) Y + O(X^2 + Y^2) + e^{-2t} \cdot O(\sqrt{X^2 + Y^2})
 \end{aligned}\right.,$$ which satisfies all the assumptions in Theorem \ref{thm1}.
 
Hence, we can simply choose $\dt = \frac{1}{2}$, $M_1 > 0$ and $0 < M_2 << 1$ so that $\{Y = M_1 X\} = \{\psi = 0\}$ and $\{Y = - M_2 X\} = \{\psi = 2(k-1) \vph\}$. After applying Theorem \ref{thm1}, there exist positive constants $\hat{\vep_0}$ and $T_0$ with the following properties: for any $\hat{\vep} \in (0, \hat{\vep_0}]$ and $T \in [T_0, \infty)$, there is a solution curve $(X(t), Y(t))$ defined on $t \in [T, \infty)$ with initial condition $X(T) = \hat{\vep}$ and $(X, Y) \rightarrow (0, 0)$ as $t \rightarrow \infty$. Furthermore, $$\left\{\begin{aligned}
    0 \leq X &\leq \hat{\vep} \\
    -M_2 X \leq Y &\leq M_1 X
\end{aligned}\right. \quad \text{for all } t \in [T, \infty)$$ and $X, Y \in O(e^{- (k - \frac{3}{2})t})$ as $t \rightarrow \infty$.

Since $\vph(t) = X(-t) + Y(-t), \psi(t) = (k-1) X(-t) - (n+k) Y(-t)$, we conclude that there exist positive constants \begin{equation} \label{const}
    \vep_0 << 1 \text{ and } T_0 > \frac{2}{2k-3} \log \ld > 0
\end{equation} with the following significance: for any $\vep \in (0, \vep_0]$ and $-T \in (- \infty, - T_0]$, there is a solution curve $(\vph(t), \psi(t))$ defined on $t \in (- \infty, -T]$ such that $\vph(-T) = \vep$ and $(\vph, \psi) \rightarrow (0, 0)$ as $t \rightarrow -\infty$. Moreover, \begin{equation} \label{cond2}
    \left\{\begin{aligned}
        0 \leq \vph &\leq \vep \\
        0 \leq \psi &\leq 2(k-1) \vph
    \end{aligned}\right. \quad \text{for all } t \in (-\infty, -T]\end{equation}
and $\vph, \psi \in O(e^{(k - \frac{3}{2})t})$ as $t \rightarrow -\infty$. Note that due to \eqref{slope}, \eqref{cond2} implies that $(\vph, \psi) \in \Dt \, \forall \, t \in (-\infty, -T]$.

Now, Proposition \ref{prop2} implies that $\Dt$ is a compact positive invariant set. It follows that we can actually extend $(\vph, \psi)$ to be a global solution (cf. \cite[Chap. 7.2, p.146--147]{HSD13}). That is to say, there is a solution curve $(\vph(t), \psi(t))$ defined on $t \in (-\infty, \infty)$ such that $\vph(-T) = \vep$ and $(\vph, \psi) \in \Dt \, \forall \, t$.

Recall that $t = \log r$, $f = r \vph$ and $f_r = \vph + \psi$. Then property \eqref{result1} follows from $(\vph, \psi) \in \Dt$. Moreover, since $\vph, \psi \in O(e^{(k - \frac{3}{2})t})$ as $t \rightarrow -\infty$, we have that $$\begin{aligned}
    f \in O(r^{k - \frac{1}{2}}) \;\text{ and } \;
    f_r \in O(r^{k - \frac{3}{2}})
\end{aligned}$$ as $r \rightarrow 0$. Therefore, 
$$F(r, \bx) = (r \bx, f(r) \CL(\bx))$$ is $\CC^1$ near the origin. Applying the classical bootstrapping argument (cf. \cite[Theorem 6.8.1]{Mor66} or \cite[Theorem 9.13]{GT01}), which follows from the elliptic regularity and Sobolev embedding, $F$ is actually analytic near $r = 0$. Combing with the fact that $k$ is an integer, we further conclude that $$\begin{aligned}
    f \in O(r^{k}) \;\text{ and } \; f_r \in O(r^{k - 1})
\end{aligned}$$ as $r \rightarrow 0$. This finishes the proof of property \eqref{result2}.

Note that $f(r)$ is smooth for all $r > 0$. It follows that $F(r, \bx)$ is smooth everywhere. In other words, $$\Sm = F(\BR_{\geq 0} \times \BS^n) = \{(r \bx, f(r) \CL(\bx)) \, \big| \, r \in \BR_{\geq 0}, \bx \in \BS^{n}\}$$ is a smooth self-expander.
\end{proof}


\section{\textbf{The uniqueness of self-expanders under the assumption (\ref{cond1})}}

After letting $t \coloneqq \log r$, $\vph \coloneqq \frac{f}{r}$ and $\psi \coloneqq \vph_t$ as with section 4, we see that the assumption \eqref{cond1} is equivalent to \eqref{cond2}. In other words, if a smooth self-expander satisfies the assumption \eqref{cond1}, then it must be constructed by using Theorem \ref{thm1}.

In this regard, we have the following uniqueness property.

\begin{prop}
Let $\vep_0$, $T_0$ be the constants in \eqref{const} and $R_0 \coloneqq e^{-T_0}$. Given $\vep \in (0, \vep_0]$ and $R \coloneqq e^{-T} \in (0, R_0]$, then there is at most one $f$ defined on $[0, \infty)$ satisfying the self-expander ($C = 1$) ODE \eqref{eq1}, with $f(R) = \vep R$, $f(0) = 0$ and the assumption \eqref{cond1}.
\end{prop}

\begin{proof}
We first show that there is at most one $f$ defined on $[0, R]$ satisfying \eqref{eq1}, $f(R) = \vep R$, $f(0) = 0$ and \eqref{cond1}. Suppose that both $f_1, f_0$ defined on $[0, R]$ satisfy all the conditions. Let $g(r) = f_1(r) - f_0(r)$. We first notice that $g$ is continuous on $[0, R]$. Moreover, it is at least $\CC^2$, in fact smooth, on $(0, R)$.

We compute that $$\begin{aligned}
    \frac{(f_1)_{rr}}{1+(f_1)_{r}^2} - \frac{(f_0)_{rr}}{1+(f_0)_{r}^2} =& \frac{g_{rr}(1+(f_0)_{r}^2) - g_{r} (f_0)_{rr} ((f_0)_{r} + (f_1)_{r})}{(1+(f_1)_{r}^2)(1+(f_0)_{r}^2)}, \\
(n-p) \frac{(f_1)_{r}}{r} - (n-p) \frac{(f_0)_{r}}{r} =& \, (n-p)\frac{g_{r}}{r}, \\
(r(f_1)_{r} - f_1) - (r(f_0)_{r} - f_0) =& \, rg_{r} - g, \\
\frac{p(r (f_1)_{r} - \ld^2 f_1)}{r^2+ \ld^2 f_1^2} - \frac{p(r (f_0)_{r} - \ld^2 f_0)}{r^2 + \ld^2 f_0^2} =& \, \frac{p g_{r}(r^3 + \ld^2 r f_0^2)}{(r^2+ \ld^2 f_1^2)(r^2 + \ld^2 f_0^2)} \\
&-\frac{p g ( \ld^2 r^2 - \ld^4 f_0 f_1 + \ld^2 r (f_0)_{r} (f_0+f_1))}{(r^2+ \ld^2 f_1^2)(r^2 + \ld^2 f_0^2)}.
\end{aligned}$$
Therefore, $g$ satisfies the following ODE: \begin{equation} \label{eq4}
    \begin{aligned}
    &\frac{g_{rr}(1+(f_0)_{r}^2) - g_{r} (f_0)_{rr} ((f_0)_{r} + (f_1)_{r})}{(1+(f_1)_{r}^2)(1+(f_0)_{r}^2)} + \frac{ (n-p) g_{r}}{r} + rg_{r} - g\\
    +& \frac{p (g_{r}(r^3 + \ld^2 r f_0^2)- g ( \ld^2 r^2 - \ld^4 f_0 f_1 + \ld^2 r (f_0)_{r} (f_0+f_1)))}{(r^2+ \ld^2 f_1^2)(r^2 + \ld^2 f_0^2)} = 0.\end{aligned}
\end{equation}

Suppose that $r_1 \in (0, R)$ is a local maximum point of $g$. Then $g_{r} (r_1) = 0$ and $g_{rr}(r_1) \leq 0$. At $r_1$, \eqref{eq4} becomes \begin{equation} \label{eq5}
    \frac{1+(f_0)_{r}^2}{(1+(f_1)_{r}^2)(1+(f_0)_{r}^2)}g_{rr} - (\frac{p ( \ld^2 r_1^2 - \ld^4 f_0 f_1 + \ld^2 r_1 (f_0)_{r} (f_1+f_0))}{(r_1^2+\ld^2f_1^2)(r_1^2+\ld^2f_0^2)} + 1)g = 0.
\end{equation}

According to the inequality \eqref{cond3} in the proof of Theorem \ref{thm1} and \eqref{const}, we see that $$f_i(r) \leq \vep R^{k - \frac{3}{2}} r^{k - \frac{1}{2}} \leq \ld^{-1} r^{k - \frac{1}{2}} \quad \forall \, i = 1, 2 \text{ and } r \in (0, R).$$ Then $\ld^2 r_1^2 - \ld^4 f_0(r_1) f_1(r_1) \geq \ld^2 r_1^2 (1 - r_1^{2k - 3})\geq 0$. Combining with $f_0, f_1, (f_0)_{r}, (f_1)_{r} \geq 0$, which follows from the assumption \eqref{cond1}, we conclude that $$\frac{1+(f_0)_{r}^2}{(1+(f_1)_{r}^2)(1+(f_0)_{r}^2)} \geq 0 \text{ and } \frac{p ( \ld^2 r_1^2 - \ld^4 f_0 f_1 + \ld^2 r_1 (f_0)_{r} (f_1+f_0))}{(r_1^2+\ld^2f_1^2)(r_1^2+\ld^2f_0^2)} + 1 \geq 0.$$ It follows from the ODE \eqref{eq5} that $g(r_1)$ must not exceed $0$.

If $r_2 \in (0, R)$ is a local minimum point of $g$, then a similar argument shows that $g(r_2)$ cannot be less than $0$. Combining with $g(0) = g(R) = 0$, we conclude that $g \equiv 0$. That is to say, $f_1 \equiv f_0$.

Finally, the uniqueness of the extension of $f$ on $[R, \infty)$ follows from the Picard-Lindelöf theorem. Hence, at most one $f$ defined on $[0, \infty)$ satisfying \eqref{eq1}, $f(R) = \vep R$, $f(0) = 0$ and \eqref{cond1}.
\end{proof}


\section{\textbf{The behavior of the self-expander at infinity}}

In this section, we investigate the behavior of self-expander we construct at infinity. We first go back to the system \eqref{eq3} (C = 1).

Note that in the positive invariant set $\Dt$ defined in Proposition \ref{prop2}, $\vph, \psi \geq 0$. Then we have $$\psi_t \leq 0$$ or $$(n - p + \frac{p}{1 + \ld^2 \vph^2} + e^{2t})\psi + (n - p + \frac{(1- \ld^2)p}{1 + \ld^2 \vph^2})\vph < 0.$$ In other words, $$e^{2t} \psi < \vph((\ld^2 p - n) - \ld^2 (n-p)\vph^2).$$ The critical point of the right hand side above is $\vph = \pm \sqrt{\frac{\ld^2 p - n}{3 \ld^2 (n-p)}} = \pm \frac{\vph_0}{\sqrt{3}}$. Therefore, $$\vph((\ld^2 p - n) - \ld^2 (n-p)\vph^2) \leq \frac{\vph_0}{\sqrt{3}} ((\ld^2 p - n) - \ld^2 (n-p) \frac{\vph_0^2}{3}) = \frac{2\ld^2 (n-p) \vph_0^3}{3 \sqrt{3}}.$$ We conclude that \begin{equation} \label{psi_t}
\psi_t > 0 \text{ only if } \psi < \Tilde{C} e^{-2t},\end{equation} where $\Tilde{C} = \frac{2 \ld^2 (n-p) \vph_0^3}{3 \sqrt{3}}.$

Now, we observe that $\lim_{t \rightarrow \infty} \vph$ exists since $\Dt$ is compact and $\vph_t = \psi \geq 0$ in $\Dt$.
\begin{prop}
$\lim_{t \rightarrow \infty} \psi$ also exists and is equal to $0$.
\end{prop}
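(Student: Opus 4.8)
The goal is to show $\lim_{t \to \infty} \psi$ exists and equals $0$, given that $\lim_{t \to \infty} \vph$ exists (call it $\vph_\infty$) and that the solution stays in the compact positive invariant set $\Dt$ for all $t$. The plan is to first establish that $\psi \to 0$, and then confirm the limit is exactly $0$ rather than merely a limit inferior. I would proceed in two stages: an \emph{upper} control on $\limsup_{t \to \infty} \psi$ coming from the preceding estimate, and a \emph{lower} control on $\liminf_{t \to \infty} \psi$ coming from the sign structure of the flow inside $\Dt$.

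\textbf{Step 1: the limit inferior is nonnegative.}
Since the solution remains in $\Dt$ for all $t$, and $\vph_t = \psi$ with $\vph$ monotone nondecreasing (as $\psi \geq 0$ throughout $\Dt$), we already have $\psi \geq 0$ for all $t$. Hence $\liminf_{t \to \infty} \psi \geq 0$ trivially.

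\textbf{Step 2: the limit superior is nonpositive.}
Here I would exploit the dichotomy derived just before the proposition: $\psi_t > 0$ can only occur when $\psi < \Tilde{C} e^{-2t}$. The idea is that whenever $\psi$ is at least $\Tilde{C} e^{-2t}$, the function $\psi$ is nonincreasing, so $\psi$ cannot escape upward past the decaying barrier $\Tilde{C} e^{-2t}$ for large $t$. More precisely, I would argue that for any fixed $\eta > 0$, once $t$ is large enough that $\Tilde{C} e^{-2t} < \eta$, the region $\{\psi \geq \eta\}$ is a region where $\psi_t < 0$, so the trajectory cannot enter $\{\psi \geq \eta\}$ from below and cannot remain there; combined with $\psi \geq 0$, this forces $\limsup_{t \to \infty} \psi \leq \eta$. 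Since $\eta > 0$ is arbitrary, $\limsup_{t \to \infty} \psi \leq 0$.

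\textbf{Step 3: conclude.}
Combining Steps 1 and 2 gives $0 \leq \liminf \psi \leq \limsup \psi \leq 0$, so $\lim_{t \to \infty} \psi = 0$, which establishes both existence and the value. Alternatively, and perhaps more cleanly, since $\vph$ is monotone and bounded with $\vph \to \vph_\infty$, and $\vph_t = \psi$, a barrier-type argument combined with the fact that $\vph_t = \psi$ is eventually squeezed between $0$ and $\Tilde{C} e^{-2t}$ for the relevant range yields integrability of $\psi$, consistent with convergence of $\vph$. The main obstacle I anticipate is making the barrier argument in Step 2 rigorous: one must rule out the trajectory crossing the moving barrier $\psi = \Tilde{C} e^{-2t}$ back and forth infinitely often. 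I would address this by noting that on the set $\{\psi \geq \Tilde{C} e^{-2t}\}$ we have $\psi_t \leq 0$, so any upward crossing of a fixed level $\eta > \Tilde{C} e^{-2t}$ is impossible once $t$ is large; thus the trajectory is eventually trapped in $\{0 \leq \psi < \eta\}$, and letting $\eta \downarrow 0$ completes the argument.
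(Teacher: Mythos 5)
Your argument is correct and rests on the same key fact the paper uses (the dichotomy that $\psi_t>0$ forces $\psi<\tilde C e^{-2t}$, equivalently $\psi_t\le 0$ wherever $\psi\ge\tilde C e^{-2t}$), but you organize it differently. The paper performs a case analysis on how the trajectory interacts with the \emph{moving} barrier $\psi=\tilde C e^{-2t}$: either the trajectory eventually stays on one side (giving monotonicity of $\psi$ on one side and a squeeze on the other), or it touches the barrier at arbitrarily large times, in which case a mean-value-theorem claim shows each touching time caps $\psi$ by $\tilde C e^{-2T_i}$ thereafter, and the squeeze lemma finishes. You instead compare against a \emph{fixed} level $\eta>0$ and run a $\liminf$/$\limsup$ argument: once $\tilde C e^{-2t}<\eta$, the set $\{\psi\ge\eta\}$ lies above the moving barrier, so the trajectory cannot cross upward into it, and $\eta\downarrow 0$ concludes. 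This is arguably cleaner, since it avoids the paper's somewhat delicate second case with the sequence $T_i$ and collapses the paper's sub-cases (1a) and (1b) into one estimate.

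One step you should make explicit: your claim that the trajectory ``cannot remain'' in $\{\psi\ge\eta\}$ does not follow merely from $\psi_t\le 0$ there (a nonincreasing function can stay above $\eta$ forever). The justification is the one the paper uses in its case (1a): if $\psi(t)\ge\eta$ for all large $t$, then $\vph_t=\psi\ge\eta$ gives $\vph(t)\ge\vph(t_0)+\eta(t-t_0)\to\infty$, contradicting that the trajectory stays in the compact set $\Dt$ (equivalently, that $\vph\to\vph_\infty<\infty$). You gesture at this in your Step 3 (``consistent with convergence of $\vph$''), but it is a needed ingredient of Step 2, not an optional remark; with it inserted, your proof is complete.
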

\begin{proof}
We split it into two cases.
\begin{enumerate}
    \item Suppose that $\exists \, T > 0$ such that $\psi(t) \neq \Tilde{C} e^{-2t}$ $\forall \, t > T$. Therefore, either \begin{enumerate}
        \item $\psi(t) > \Tilde{C} e^{-2t}$ $\forall \, t > T$ or
        \item $\psi(t) < \Tilde{C} e^{-2t}$ $\forall \, t > T$ 
    \end{enumerate} happens.
    
    For (a), note that $\psi_t(t) < 0$ $\forall \, t > T$. Since $\Dt$ is compact, it implies that $\lim_{t \rightarrow \infty} \psi$ exists. Moreover, since $\lim_{t \rightarrow \infty} \vph$ exists and $\vph_t = \psi$, $\lim_{t \rightarrow \infty} \psi$ must equal $0$.
    
    For (b), note that $0 \leq \psi(t) < \Tilde{C} e^{-2t}$ $\forall \, t > T$. Then by the squeeze lemma, $$\lim_{t \rightarrow \infty} \psi = 0.$$
    
    \item Suppose that $\forall \, \Tilde{T} > 0$, $\exists \, T > \Tilde{T}$ such that $\psi(T) = \Tilde{C} e^{-2T}$. We first claim that if $\psi(T) = \Tilde{C} e^{-2T}$, then $\psi(t) < \Tilde{C} e^{-2T}$ $\forall \, t > T$. We argue it by contradiction.
    
    Assume that the statement is false, say $\exists \, t_1 > T$ such that $\psi(t) > \Tilde{C} e^{-2T} > \Tilde{C} e^{-2t_1}$. Let $$g(t) \coloneqq \psi(t) - \Tilde{C} e^{-2t}.$$ Define $$S \coloneqq \{ t \in [T, t_1] \, \big| \, g(t) = 0\}.$$ Since $S$ is bounded, $\sup S$ exists. Moreover, the continuity of $F$ and the fact that $g(t_1) > 0$ show that $t_0 \coloneqq \sup S < t_1$. Now, by the Intermediate Value Theorem, $$\psi(t) > \Tilde{C} e^{-2t} \quad \forall \, t \in (t_0, t_1].$$ Furthermore, by the Mean Value Theorem, $\exists \, t_2 \in (t_0, t_1)$ such that $$\psi_t(t_2) = \frac{\psi(t_1) - \psi(t_0)}{t_1 - t_0} > 0,$$ which contradicts to \eqref{psi_t}.
    
    Finally, according to the claim, we have a sequence of $\{T_i\}_{i = 1}^{\infty}$ such that $T_i < T_j$ $\forall \, i < j$, $T_i \rightarrow \infty $ as $i \rightarrow \infty$ and $\psi(t) < \Tilde{C} e^{-2T_i}$ $\forall \, t > T_i$. By the squeeze lemma, we conclude that $$\lim_{t \rightarrow \infty} \psi = 0.$$
\end{enumerate}
\end{proof}

Let $\vph_{\infty} \coloneqq \lim_{t \rightarrow \infty} \vph$. Recall that $f = r \vph$, then the aforementioned discussion shows that the self-expander $$\Sigma = \{F(r, \bx) = ( r \bx, f(r) \CL(\bx) ) \, \big| r \in \BR_{\geq 0}, \bx \in \BS^{n}\}$$ is asymptotic to the cone $$\{ ( r \bx, r \vph_{\infty} \CL(\bx)) \, \big| \, r \in \BR_{\geq 0}, \bx \in \BS^{n}\}$$ as $r \rightarrow \infty$.


\bibliographystyle{abbrv}

\begin{thebibliography}{10}

\bibitem{DY06}
W.~Ding and Y.~Yuan.
\newblock {\em Resolving the singularities of the minimal {H}opf cones}.
\newblock J. Partial Differential Equations, 19(3):218--231, 2006.

\bibitem{GT01}
D.~Gilbarg and N.~S. Trudinger.
\newblock {\em Elliptic partial differential equations of second order}.
\newblock Classics in Mathematics. Springer-Verlag, Berlin, 2001.
\newblock Reprint of the 1998 edition.

\bibitem{HL82}
R.~Harvey and H.~B. Lawson, Jr.
\newblock {\em Calibrated geometries}.
\newblock Acta Math., 148:47--157, 1982.

\bibitem{HSD13}
M.~W. Hirsch, S.~Smale, and R.~L. Devaney.
\newblock {\em Differential equations, dynamical systems, and an introduction
  to chaos}.
\newblock Elsevier/Academic Press, Amsterdam, third edition, 2013.

\bibitem{Hui90}
G.~Huisken.
\newblock {\em Asymptotic behavior for singularities of the mean curvature flow}.
\newblock J. Differential Geom., 31(1):285--299, 1990.

\bibitem{Lee22}
C.-K.~Lee.
\newblock {\em Self-similar solutions to the mean curvature flow based on the {L}awson-{O}sserman cone}.
\newblock Thesis (Master), National Taiwan University, 2022.

\bibitem{LO77}
H.~B. Lawson, Jr. and R.~Osserman.
\newblock {\em Non-existence, non-uniqueness and irregularity of solutions to the
  minimal surface system}.
\newblock Acta Math., 139(1-2):1--17, 1977.

\bibitem{Mor66}
C.~B. Morrey, Jr.
\newblock {\em Multiple integrals in the calculus of variations}.
\newblock Die Grundlehren der mathematischen Wissenschaften, Band 130.
  Springer-Verlag New York, Inc., New York, 1966.

\bibitem{Sim68}
J.~Simons.
\newblock {\em Minimal varieties in riemannian manifolds}.
\newblock Ann. of Math. (2), 88:62--105, 1968.

\bibitem{Sta98}
N.~Stavrou.
\newblock {\em Selfsimilar solutions to the mean curvature flow}.
\newblock J. Reine Angew. Math., 499:189--198, 1998.

\bibitem{Xin96}
Y.~Xin.
\newblock {\em Geometry of harmonic maps}, volume~23 of {\em Progress in
  Nonlinear Differential Equations and their Applications}.
\newblock Birkh\"{a}user Boston, Inc., Boston, MA, 1996.

\bibitem{XYZ19}
X.~Xu, L.~Yang, and Y.~Zhang.
\newblock {\em Dirichlet boundary values on {E}uclidean balls with infinitely many
  solutions for the minimal surface system}.
\newblock J. Math. Pures Appl. (9), 129:266--300, 2019.

\end{thebibliography}



\end{document}